\numberwithin{equation}{section}
 \theoremstyle{plain}
\newtheorem{theorem}[equation]{Theorem}
\newtheorem{lemma}[equation]{Lemma}
\newtheorem{corollary}[equation]{Corollary}
\newtheorem{singular case}[equation]{Proof of Theorem \ref{Holder continuity}}
\theoremstyle{definition}
\newtheorem{definition}[equation]{Definition}
\theoremstyle{remark}
\newtheorem{remark}[equation]{Remark}
\def\kint_#1{\mathchoice%
          {\mathop{\kern 0.2em\vrule width 0.6em height 0.69678ex depth -0.58065ex
                  \kern -0.8em \intop}\nolimits_{\kern -0.4em#1}}%
          {\mathop{\kern 0.1em\vrule width 0.5em height 0.69678ex depth -0.60387ex
                  \kern -0.6em \intop}\nolimits_{#1}}%
          {\mathop{\kern 0.1em\vrule width 0.5em height 0.69678ex depth -0.60387ex
                  \kern -0.6em \intop}\nolimits_{#1}}%
          {\mathop{\kern 0.1em\vrule width 0.5em height 0.69678ex depth -0.60387ex
                  \kern -0.6em \intop}\nolimits_{#1}}}
\def\vintslides_#1{\mathchoice%
          {\mathop{\kern 0.1em\vrule width 0.5em height 0.697ex depth -0.581ex
                  \kern -0.6em \intop}\nolimits_{\kern -0.4em#1}}%
          {\mathop{\kern 0.1em\vrule width 0.3em height 0.697ex depth -0.604ex
                  \kern -0.4em \intop}\nolimits_{#1}}%
          {\mathop{\kern 0.1em\vrule width 0.3em height 0.697ex depth -0.604ex
                  \kern -0.4em \intop}\nolimits_{#1}}%
          {\mathop{\kern 0.1em\vrule width 0.3em height 0.697ex depth -0.604ex
                  \kern -0.4em \intop}\nolimits_{#1}}}
\newcommand{\eps}{\varepsilon}
\newcommand{\R}{\mathbb{R}}
\newcommand{\Rn}{\mathbb{R}^d}
\renewcommand{\limsup}{\operatornamewithlimits{lim \, sup}}
\newcommand{\esssup}{\operatornamewithlimits{ess\, sup}}
\newcommand{\essinf}{\operatornamewithlimits{ess\,inf}}
\newcommand{\essosc}{\operatornamewithlimits{ess\,osc}}
\renewcommand{\l}{\left}
\renewcommand{\r}{\right}
\def\Xint#1{\mathchoice
{\XXint\displaystyle\textstyle{#1}}%
{\XXint\textstyle\scriptstyle{#1}}%
{\XXint\scriptstyle\scriptscriptstyle{#1}}%
{\XXint\scriptscriptstyle\scriptscriptstyle{#1}}%
\!\int}
\def\XXint#1#2#3{{\setbox0=\hbox{$#1{#2#3}{\int}$}
\vcenter{\hbox{$#2#3$}}\kern-.5\wd0}}
\def\dashint{\Xint-}
\title[H\"older regularity for parabolic De Giorgi classes]{H\"older regularity for parabolic De Giorgi classes in metric measure spaces}
\author{Mathias Masson and Juhana Siljander}
\address[M.M]{Aalto University,
Department of Mathematics and Systems Analysis,
P.O. Box 11100
FI-00076 Aalto, Finland}
\email{mathiasmasson@hotmail.com}
\address[J.S.]{University of Helsinki, Department of Mathematics and Statistics, P.O. Box 68, FI-00014 University of Helsinki, Finland}
\email{juhana.siljander@helsinki.fi}
\begin{document}

\subjclass[2010]{Primary 35B65. Secondary 35K65, 31E05}

\keywords{H\"older continuity, intrinsic scaling, Moser iteration, De Giorgi method, Newtonian space, upper gradient, quasiminima, De Giorgi class, nonlinear parabolic equations, analysis on metric spaces}

\begin{abstract}
We give a proof for the H\"older continuity of functions in the parabolic De Giorgi classes in metric measure spaces. We assume the measure to be doubling, to support a weak $(1,p)$-Poincar\'e inequality and to satisfy the annular decay property.
\end{abstract}
%\date{\today}

\maketitle

\section{Introduction}

The fine properties of the parabolic De Giorgi class are the subject of this paper. This is a class of functions which satisfy a parabolic energy estimate, which in the Euclidean case is related to the $p$-parabolic partial differential equation
\begin{align*}
-\frac{\partial u}{\partial t}+ \textrm{div}(|\nabla u |^{p-2} \nabla u)=0,
\end{align*}
and to compatible parabolic quasiminimizers. A function $u:\Omega \times (0,T) \rightarrow \R$, which is a parabolic $K$-quasiminimizer compatible with the $p$-parabolic partial differential equation in $\Rn$, satisfies
\begin{align*}
 -\int_{\textrm{supp}\,\phi} u \frac{\partial \phi}{\partial t} \, dx\, dt+ &C_1\int_{\textrm{supp}\,\phi} |\nabla u|^p \,dx\, dt\\
&\leq KC_2 \int_{\textrm{supp}\,\phi} |\nabla u - \nabla \phi |^p \, dx\, dt,
\end{align*}
with some $C_1,C_2>0$ and $K\geq 1$, for every smooth compactly supported function $\phi$ in $\Omega \times (0,T)$. Here $\Omega$ denotes a domain in $\Rn$.

Our main result is the local H\"older continuity of the parabolic De Giorgi class functions in metric measure spaces, extending the results obtained for parabolic $K$-quasiminima by Zhou ~\cite{Zhou93, Zhou94} in $\Rn$ with the Lebesgue measure.
Historically, the parabolic version of De Giorgi classes has been investigated in euclidean spaces by Ladyzhenskaja- Solonnikov-Ural'ceva \\~\cite{LadySoloUral68} and DiBenedetto, and later by  ~\cite{DiBe88}, Wieser \cite{Wies87} and Gianazza-Vespri \cite{GianVesp06c}, to name a few. %Other regularity questions for parabolic De Giorgi classes in $\Rn$ have been studied by Gianazza and Vespri in~. 

Our argument is a modification of the DiBenedetto scheme~\cite{DiBe86, DiBe93, Urba08}. We assume the underlying measure to be doubling and to support a weak $(1,p)$-Poincar\'e inequality. Together these imply a Sobolev inequality. Also, we assume the metric measure space to satisfy the annular decay property \cite{Buck99}. In order to study general measure spaces instead of proving the argument only for the usual Lebesgue measure, we base the proof on integral averages, so that the actual scaling properties of the measure are not needed. A similar technique is applied in ~\cite{KuusLaleSiljUrba10,KuusSiljUrba10}. In the elliptic case, the weighted theory has originally been studied for example by Chiarenza and Serapioni,~\cite{ChiaSera84a, ChiaSera84b, ChiaSera85}.

%Hence, as far as we know our results are new already in $\Rn$. However, 
Due to the general nature of the argument, our approach may open possibilities to establish H\"older continuity in some specific cases of interest, such as for solutions of sub-parabolic equations in Heisenberg groups. Indeed, as the Heisenberg group equipped with the Haar measure is a length space, it is known to satisfy the $\alpha$-annular decay property \cite{Buck99}.

Our argument has several similarities with the Euclidean case, but the new material is substantial. Since standard gradients cannot be defined in a general metric space, we consider upper gradients, and replace the standard Sobolev spaces with Newtonian spaces, see~\cite{BjorBjor11, Shan00}. The parabolic De Giorgi class is defined using the upper gradients, and the modified DiBenedetto method is carried out accordingly.

The motive behind defining the De Giorgi class in this way, is that it enables us to extend the study of parabolic partial differential equations to metric measure spaces. This in turn helps us to better understand those aspects of the theory which are independent of the geometry of the space, where the partial differential equation is originally defined. In practice the extention is done via concepts which in $\R^d$ are closely related to the PDE at hand, and are definable without assuming Euclidean structure of the underlying space. 

For the elliptic case such a concept are the quasiminima, which are known to belong to the elliptic De Giorgi classes, see \cite{Gius03, Giaq93, KinnShan01}. 
 In $\Rn$, classical examples of functions belonging to parabolic De Giorgi classes are the solutions
of parabolic partial differential equations as well as the parabolic quasiminima. The latter has been studied by Zhou in~\cite{Zhou93, Zhou94} and by Wieser in~\cite{Wies87}, whereas for PDEs we refer to~\cite{DiBe93,Urba08}.

In the last part of this paper we show that parabolic quasiminima in  metric measure spaces belong to the parabolic De Giorgi class. A somewhat unexpected difficulty arises in proving the usual De Giorgi estimates for parabolic quasiminima in metric spaces. Indeed, since taking upper gradients is not a linear operation, the usual time mollification argument used in the Euclidean case seems to be destroyed. We circumvent this by introducing so called Cheeger derivatives ~\cite{Chee99}, which have the property that taking a Cheeger derivative is a linear operation. %This is done in section 

\section{Preliminaries} \label{section:preliminaries}
\subsection{Metric measure spaces.} Let $(X,d,\mu)$ be a complete metric measure space with metric $d$ and a positive complete Borel measure $\mu$. The measure $\mu$ is said to be doubling if there exists a universal constant $C_\mu\ge 1$ such that
\[
 \mu(B(x,2r))\le C_\mu \mu(B(x,r)),
\]
for every $r>0$ and $x\in X$. Here $B(x,r)$ denotes the standard open ball
\[
B(x,r)=\{y\in X : d(x,y)<r\}.
\]
The dimension related to the doubling measure is defined to be 
\begin{align*}
 d_\mu=\log_2 C_\mu.
\end{align*}
 By iterating the doubling condition, it follows that
\begin{equation*} \label{eq:DoublingConsequence}
\frac{\mu(B(z,r))}{\mu(B(y,R))} \geq C_\mu^{-2}\Bigl(\frac{r}{R}\Bigr)^{d_\mu},
\end{equation*}
for all balls $B(y,R)\subset X$, $z \in B(y,R)$
and $0 < r \leq R < \infty$.
Given $\alpha>0$ and a metric space $(X,d,\mu)$ with a doubling $\mu$, we say that the space satisfies the $\alpha$-annular decay property if there exists a constant $c\ge 1$ such that
\begin{equation}\label{annular_decay}
\mu(B(x,r)\setminus B(x,(1-\delta)r))\le c \delta^\alpha\mu(B(x,r))
\end{equation}
for all $B(x, r)\subset X$ and for all
 $0<\delta<1$. Every length space has this property, in particular this is true in $\Rn$. For further information about the spaces which satisfy this, see \cite{Buck99}.

\subsection{Upper gradients.} Let $\Omega\subset X$ be open. Following \cite{HeinKosk98}, a non-negative Borel measurable function $g: \Omega \rightarrow [0, \infty]$ is said to be an upper gradient of a function $u: \Omega\rightarrow [-\infty, \infty]$ in $\Omega$, if for all compact rectifiable paths $\gamma$ joining $x$ and $y$ in $\Omega$ we have 
\begin{align}\label{upper gradient}
|u(x)-u(y)|\leq \int_{\gamma} g \,ds.
\end{align}
In case $u(x)=u(y)=\infty$ or $u(x)=u(y)=-\infty$, the left side is defined to be $\infty$. Assume $1\leq p <\infty$. The $p$-modulus of a family of paths $\Gamma$ in $\Omega$ is defined to be
\begin{align*}
 \inf_\rho \int_\Omega \rho^p \,d\mu,
\end{align*}
where the infimum is taken over all non-negative Borel measurable functions $\rho$ such that for all rectifiable paths $\gamma$ which belong to $\Gamma$, we have
\begin{align*}
 \int_{\gamma} \rho \, ds \geq 1.
\end{align*}
A property is said to hold for $p$-almost all paths, if the set of non-constant paths for which the property fails is of zero $p$-modulus. Following \cite{KoskMacm98,Shan00}, if \eqref{upper gradient} holds for $p$-almost all paths $\gamma$ in $X$, then $g$ is said to be a $p$-weak upper gradient of $u$. 

When $1<p<\infty$ and $u\in L^p(\Omega)$, it can be shown \cite{Shan01} that  there exists a minimal $p$-weak upper gradient of $u$, we denote it by $g_u$, in the sense that $g_u$ is a $p$-weak upper gradient of $u$
and for every $p$-weak upper gradient $g$ of $u$ it holds $g_u\leq g$ $\mu$-almost everywhere in $\Omega$. %The minimal $p$-weak upper gradient is unique up to a set of zero $\mu$-measure. 
Moreover, if $v=u$ $\mu$-almost everywhere in a Borel set $A\subset \Omega$, then $g_v=g_u$ $\mu$-almost everywhere in $A$. Also, if $u,v\in L^p(\Omega)$, then $\mu$-almost everywhere in $\Omega$, we have
\begin{align*}
&g_{u+v}\leq g_u+g_v,\\
&g_{uv}\leq |u|g_v+|v|g_u.
\end{align*}
Proofs for these properties and more on upper gradients in metric spaces can be found for example in \cite{BjorBjor11} and the references therein.

%\begin{lemma}\label{uniqueness of minimal gradient}
%If $v=u$ $\mu$-almost everywhere in a Borel set $A\subset \Omega$, then $g_v=g_u$ $\mu$-almost everywhere in $A$.
%\begin{proof}
%For proof see~\cite{Shan00}.
%\end{proof} 
%\end{lemma}   

\subsection{Newtonian spaces.} Following \cite{Shan00}, for $1< p<\infty$ and $u\in L^p(\Omega)$, we define
\begin{align*}
\|u\|_{1,p,\Omega}=\|u\|_{L^p(\Omega,\mu)}+\|g_u\|_{L^p(\Omega,\mu)},
\end{align*}
and
\begin{align*}
\widetilde N^{1.p}(\Omega)= \{ u\,:\, \|u\|_{1,p,\Omega}<\infty\}.
\end{align*}
An equivalence relation in $\widetilde N^{1,p}(\Omega)$ is defined by saying that $u\sim v$ if
\begin{align*}
 \|u-v\|_{\widetilde N^{1,p}(\Omega)}=0.
\end{align*}
The \emph{Newtonian space} $N^{1,p}(\Omega)$ is defined to be the space $\widetilde N^{1,p}(\Omega)/ \sim$, with the norm
\begin{align*}
 \|u\|_{N^{1,p}(\Omega)}=\|u\|_{1,p,\Omega}.
\end{align*}

A function $u$ belongs to the local Newtonian space $N_{\textrm{loc}}^{1,p}(\Omega)$ if it belongs to $N^{1,p}(\Omega')$ for every $\Omega' \subset \subset \Omega$. For more properties of Newtonian spaces, see \cite{Hein01,Shan00, BjorBjor11}.

\subsection{Poincar\'e's inequality}
For positive $1\leq q <\infty$, $1\leq p < \infty$, the measure $\mu$ is said to support a weak $(q,p)$-Poincar\'e inequality if there exist constants $P_0>0$ and $\tau\ge 1$ such that
\begin{equation}
\l(\dashint_{B(x,r)}|v-v_{B(x,r)}|^q \, d\mu\r)^{1/q} \le P_0 r\left(\dashint_{B(x,\tau r)} g_v^p \, d\mu\right)^{1/p},
\end{equation}
for every $v\in N^{1,p}(X)$ and $B(x,\tau r)\subset X$. Here we use the notation
\[
 v_{B(x,r)}=\dashint_{B(x,r)} v \, d\mu = \frac{1}{\mu(B(x,r))}\int_{B(x,r)} v \, d\mu.
\]

In case $\tau=1$, we say a $(q,p)$-Poincaré inequality is in force. In a general metric measure space setting, it is of interest to have assumptions which are invariant under bi-Lipschitz mappings. The weak $(q,p)$-Poincaré inequality has this quality. 

For a metric space $X$ equipped with a doubling measure $\mu$, the following result of \cite{HajlKosk95} is known: If  $X$ supports a weak $(1,p)$-Poincaré inequality for some $1<p<\infty$, then $X$ also supports a weak $(\kappa,p)$-Poincaré
inequality, where
\begin{equation*}%\label{sobolev_exponent}
\kappa=\begin{cases}\frac{d_\mu p}{d_\mu-p},  &\text{for} \quad 1<p<d_\mu, \\ 2p, &\text{otherwise}, \end{cases}
\end{equation*}
possibly with different constants $P_0'>0$ and $\tau'\geq 1$. As a consequence of this, by the $(\kappa,p)$-Poincaré inequality and for example by Poposition 5.41 in \cite{BjorBjor11}, there exists a positive constant $C$  such that 
\begin{align}\label{Sobolev}
\left(\dashint_{B(x,r)} |v|^\kappa \, d\mu\right)^{1/\kappa}\le Cr\left(\dashint_{B(x,r)} g_v^p\, d\mu\right)^{1/p}.
\end{align}
for every $v\in N_0^{1,p}(X)$ and $B(x,r)\subset X$.

\begin{remark} \label{poincare_remark}
It is a result of \cite{KeitZhon08}, that when $1<p<\infty$ and $(X,d)$ is a complete metric space with doubling measure $\mu$, the weak $(1,p)$-Poincaré inequality implies a weak $(1,q)$-Poincar\'e inequality for some $1<q<p$. Then by the above discussion, $X$ also supports a weak $(\kappa,q)$-Poincaré inequality with some $\kappa>q$. By Hölder's inequality, the left hand side of the weak $(\kappa,q)$-Poincaré inequality can be estimated from below by replacing $\kappa$ with any positive $\kappa'<\kappa$. Hence we conclude, that if $X$ supports a weak $(1,p)$-Poincar\'e inequality with $1<p<\infty$, then $X$ also supports a weak $(q,q)$-Poincar\'e inequality with some $1<q<p$.

\end{remark}

\subsection{Parabolic upper gradients and Newtonian spaces}
We define the parabolic Newtonian space  $L^p(0,T;N^{1,p}(\Omega))$ to be the space of functions $u(x,t)$ such that for almost every $0<t<T$ the function $u(\cdot, t)$ belongs to $N^{1,p}(\Omega)$, and
\[
\int\limits_{0}^{T}\|u(\cdot,t)\|_{1,p,\Omega}^p\, dt < \infty.
\]
The definition of the space $L_{\textrm{loc}}^p(0,T;N_{\textrm{loc}}^{1,p}(\Omega))$ is obvious. In what follows we will denote the product measure by $d\nu=d\mu\, dt$.
Let $u\in L_{\textrm{loc}}^p(0,T;N_{\textrm{loc}}^{1,p}(\Omega))$. The parabolic minimal $p$-weak upper gradient of $u$ is defined in a natural way by setting
\begin{align*}
g_u(x,t)=g_{u(\cdot,t)}(x), %\quad (x,t)\in\Omega\times (0,T),
\end{align*}
at $\nu$-almost every $(x,t)\in \Omega\times (0,T)=\Omega_T$. For the sake of conciseness we refer to the parabolic minimal $p$-weak upper gradient of a time dependent function, by calling it  the upper gradient. %Later in this paper we will exploit the following density result. For its proof, we refer the reader to \cite{MassMiraParoParv12}.
%\begin{theorem}\label{density of Lipschitz}
%Lip$(\Omega_T)$ is a dense subset in the space $L^p(0,T;N^{1,p}(\Omega))$.
%\end{theorem}

Next we define the class of functions for which we prove local Hölder continuity.
\begin{definition}[Parabolic De Giorgi class]
\label{De Giorgi class}
For $p>2$, we say that $u\in L_\textrm{loc}^p(0,T;N_{\textrm{loc}}^{1,p}(\Omega))$ belongs to the \emph{parabolic De Giorgi class}, if there exists a positive constant $C$, with which we have 
\begin{align*}
&\esssup_{\tau_1<t<\tau_0} \int_{B(x,r_1)} (u(x,t)-k)_\pm^2 \, d\mu + \int_{\tau_1}^{\tau_0} \int_{B(x,r_1)} g_{(u-k)_\pm}^p \, d\nu\\
&\leq  \frac{C}{(r_2-r_1)^p}\int_{\tau_2}^{\tau_0}\int_{B(x,r_2)} (u-k)_\pm^p \, d\nu+\frac{C}{(\tau_1-\tau_2)}\int_{\tau_2}^{\tau_1}\int_{B(x,r_2)}(u-k)_\pm^2 \, d\nu,
\end{align*}
for every $x\in \Omega$,  $r_1<r_2$  and $\tau_2<\tau_1<\tau_0$, such that $B(x,r_2)\times (\tau_2,\tau_0)\subset \Omega_T$. As an immediate consequence, by the Lebesgue differentiation theorem, we see by taking the limit $\tau_2\rightarrow \tau_1$, that for almost every $\tau_1 \in (0,\tau_0)$, we have
\begin{equation}\label{DGC limit}
\begin{split}
&\esssup_{\tau_1<t<\tau_0} \int_{B(x,r_1)} (u(x,t)-k)_\pm^2 \, d\mu + \int_{\tau_1}^{\tau_0} \int_{B(x,r_1)} g_{(u-k)_\pm}^p \, d\nu\\
&\leq  \frac{C}{(r_2-r_1)^p}\int_{\tau_1}^{\tau_0}\int_{B(x,r_2)} (u-k)_\pm^p \, d\nu+C\int_{B(x,r_2)}(u(x,\tau_1)-k)_\pm^2 \, d\mu.
\end{split}
\end{equation}
\end{definition}
The following regularity theorem is the main result of this paper.
\begin{theorem}\label{Holder continuity}
Assume $(X, d)$ is a complete metric space equipped with a complete doubling positive Borel measure $\mu$ . Assume $X$ supports a weak $(1,p)$-Poincaré inequality for  $2<p<\infty$, and satisfies the $\alpha$-annular decay property for some $0<\alpha <1$. Let $u\in  L_{\textrm{loc}}^p(0,T;N_{\textrm{loc}}^{1,p}(\Omega))$ be locally essentially bounded and belong to the parabolic De Giorgi class. Then $u$ has a representative which is locally H\"older continuous in $\Omega_T$.
\end{theorem}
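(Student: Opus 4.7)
The plan is to adapt DiBenedetto's intrinsic scaling scheme to the metric setting, using only integral averages so that the argument depends on the underlying geometry through the doubling, weak $(1,p)$-Poincar\'e, and annular decay assumptions alone. Because the energy estimate \eqref{DGC limit} is non-homogeneous --- mixing an $L^\infty_t L^2_x$ term and an $L^p_x$ gradient term on the left with $L^p$ and $L^2$ norms of truncations on the right, and $p>2$ --- one cannot work in standard parabolic cylinders $B(x_0,r)\times(t_0-r^p,t_0)$. Instead, for each candidate upper bound $\omega$ on the oscillation one introduces intrinsic cylinders
\[
Q(R,\theta R^p)=B(x_0,R)\times(t_0-\theta R^p,t_0),\qquad \theta\sim\omega^{2-p},
\]
and it suffices to prove a single oscillation-reduction step of the form $\essosc_{Q(\eta R,\theta(\eta R)^p)} u\le\lambda\omega$, with $\eta,\lambda\in(0,1)$ universal. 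Iteration on a sequence of nested intrinsic cylinders, combined with the doubling of $\mu$, then converts this into H\"older continuity of a representative of $u$ at $(x_0,t_0)$.

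The reduction step is obtained through a dichotomy on the truncations $(u-k)_\pm$ for suitable levels $k$ approaching $\esssup u$ or $\essinf u$. The first ingredient is a De Giorgi type lemma derived from \eqref{DGC limit}: on a sequence of shrinking intrinsic subcylinders $Q_j$ with levels $k_j$ increasing to $k_\infty$, one combines the energy estimate with the Sobolev inequality \eqref{Sobolev} --- itself a consequence of doubling and the weak $(1,p)$-Poincar\'e inequality --- to produce a nonlinear recursion
\[
Y_{j+1}\le C\,b^{j}\,Y_j^{1+\sigma},\qquad Y_j=\frac{1}{\nu(Q_j)}\iint_{Q_j}(u-k_j)_\pm^p\,d\nu,
\]
with $\sigma>0$ depending on $p$, $d_\mu$ and the exponent $\kappa$ of \eqref{Sobolev}. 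A standard fast-decay lemma forces $Y_j\to 0$ provided $Y_0$ is below an absolute threshold. Quantitatively, if the set where $u$ is close to its essential supremum occupies only a small fraction of $Q(R,\theta R^p)$, then $u$ is strictly below $\esssup u$ by a definite multiple of $\omega$ on a smaller intrinsic cylinder, which disposes of the first alternative. At every step the annular decay property \eqref{annular_decay} compensates for the lack of an explicit Lebesgue-style volume formula when estimating integrals over the thin strips carved out by cut-off functions.

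If the superlevel set is instead not small, we enter the second alternative. A time-propagation lemma, proved by testing \eqref{DGC limit} at a carefully chosen level and exploiting the $\esssup$-in-time term on the left-hand side, transforms a measure estimate of the superlevel set on one time slice into a measure bound on all later slices of a shorter intrinsic cylinder. A logarithmic-type estimate, obtained by applying \eqref{DGC limit} with a truncation of the form $\log^+((u-k)_+/\eps)$, then singles out a later time slice on which the superlevel set has small relative measure; at that slice the De Giorgi lemma of the previous paragraph reapplies and the oscillation contracts by a universal factor. Combining the two alternatives yields the required reduction of oscillation, and the iteration is complete.

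The principal obstacle is the second alternative. In the Euclidean theory the time-propagation and logarithmic estimates are customarily derived through pointwise manipulations and time mollification of $u$, both of which are unavailable here since taking upper gradients is not a linear operation and $u$ is only defined up to a $\mu$-null set. The argument must therefore be reorganised so that every intermediate quantity is an integral average of $(u-k)_\pm$ or $g_{(u-k)_\pm}$, with \eqref{DGC limit} used directly in place of differential identities. The annular decay property plays the role of a substitute volume formula throughout; it is precisely what allows the integral-average version of the iteration to close and removes any residual dependence of the scheme on Euclidean structure.
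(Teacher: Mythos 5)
Your overall architecture (intrinsic cylinders with $\theta\sim\omega^{2-p}$, a De Giorgi iteration lemma built from \eqref{DGC limit} and the Sobolev inequality, a fast-decay lemma, a dichotomy of alternatives, and annular decay to handle the spatial cut-offs) matches the paper's scheme. The genuine gap is in your treatment of the second alternative: you propose to obtain the slice-wise smallness of the superlevel set from ``a logarithmic-type estimate, obtained by applying \eqref{DGC limit} with a truncation of the form $\log^+((u-k)_+/\eps)$.'' No such estimate is available here. The hypothesis is only membership in the parabolic De Giorgi class, i.e.\ the Caccioppoli-type inequalities for the linear truncations $(u-k)_\pm$; there is no equation (and no quasiminimizing inequality) left to test with a logarithmic function of $u$, so the classical DiBenedetto logarithmic lemma cannot be derived from the assumed energy estimates, and in the metric setting one would in any case lack the chain-rule identities used to produce it. As stated, your second alternative does not close.

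The paper circumvents this with two ingredients you would need to supply. First, forward-in-time propagation of measure information is done directly from \eqref{DGC limit}: from the second alternative one finds, in every intrinsic time block, a slice $t'$ on which the superlevel set is a definite fraction of $B(x_0,r)$, and the energy estimate with a level $k=\esssup_F u-2^{-s}\essosc_F u$, combined with the $\alpha$-annular decay property \eqref{annular_decay} to pass from $B(x_0,(1-\delta)r)$ back to $B(x_0,r)$, shows this fraction persists (slightly degraded) for almost every later time in $(t_0-\theta_+r^p,t_0)$ (Lemma \ref{logarithmic_bound}); this is where annular decay is actually needed, not in the cut-off strips of the iteration. Second, the shrinking of the relative measure of $\{u>k_0^+\}$ to below the iteration threshold is achieved not by a logarithmic estimate but by a De Giorgi-type level-summation argument: one applies a weak $(q,q)$-Poincar\'e inequality with $q<p$ (available by the Keith--Zhong self-improvement, Remark \ref{poincare_remark}) to the truncation of $u$ between consecutive levels $k(s),h(s)$, uses \eqref{DGC limit} to bound the upper-gradient term, and sums over $s=1,\dots,\lambda-1$ to get $\nu(E_{k_0^+})\le C(\lambda-1)^{-(p-q)/p}\,\nu(Q)$, so that choosing $\lambda$ large makes the initial condition of the fast-decay lemma hold (Lemma \ref{choosing_lambda}). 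Unless you replace your logarithmic step by arguments of this kind (or show how such an estimate could be extracted from the De Giorgi class itself, which is doubtful), the proof of the reduction of oscillation in the second alternative is missing.
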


The proof is based on the DiBenedetto argument and it is divided into two alternatives. We will use intrinsic scaling \cite{Urba08} in order to homogenize the powers on the right hand side of the estimate in definition~\ref{De Giorgi class}. We start by constructing the geometry.

\subsection{Initial setting}\label{constructing the argument}

Let $\lambda\geq 1$ be an a priori constant, which has been fixed in a manner which will become clear later in this text. Assume a compact set $S \subset \Omega_T$. Our aim is to show that $u$ is Hölder continuous in $S$. Since $S$ is compact, there exists an open set $F$ such that $S\subset F\subset \subset \Omega_T$.%Let $\mu^-$ and $\mu^+$ be positive numbers such that
%\begin{align*}%\label{kaasumaaviina}
% \essinf_{F} u \geq \mu^-, \qquad \qquad  \esssup_{F} u \leq \mu^+.
%\end{align*}
%Since by assumption $u$ is locally essentially bounded, such numbers exist. 

Since $u$ is assumed to be locally essentially bounded, $\essosc_F u$ is finite. We assume that $\essosc_F u>0$, because otherwise Hölder continuity in $S$ is trivially true. We redefine $u(x,t)=\esssup_F u$ in the $\nu$-negligible subset of $F$ where $u(x,t)\not \in[\essinf_F u, \esssup_F u]$. Set
\begin{align*}
\theta_\pm=\left(\frac{\essosc_F u}{\gamma_\pm} \right)^{2-p},
\end{align*}
where $\gamma_-=2$ and $\gamma_+=2^\lambda$. Let $r$ be a positive number such that for every $(x,t)\in S$ we have $B(x,2\tau r)\times (t-2\theta_+ r^p,t) \subset F$. Clearly $r$ is controlled by the distance of $S$ to the complement of $\Omega_T$.

Considering any point $(x,t)\in S$, it will turn out that the constants in the reduction of oscillation of $u$ in the neighborhood of $(x,t)$ depend only on $p,d_\mu,C_\mu,P_0,\tau$ and on the constant $r$. This in turn implies by a standard iterative argument \cite{Urba08}, that $u$ is Hölder continuous in $S$, and that the modulus of Hölder continuity will depend only on $p,d_\mu,C_\mu,P_0, \tau, r$ and on $\essosc_F u$. Therefore, to prove the Hölder continuity of $u$ in a compact set $S$, it is enough to consider any one point in $S$, and examine the oscillation of $u$ in its neighborhood.

So from here on, let $(x_0,t_0)$ denote some fixed point in $S$. For brevity, in what follows we will refer to the set of constants $p,d_\mu,C_\mu,P_0, \tau$ as the data.
% and
% \begin{align*}
% \essosc_F u\ge 2^\lambda r. %\quad r^p \leq 2^{-\lambda p} (1+ (\essosc_F u)^{2-p})^{-1}.
% \end{align*}

\section{Estimates for the parabolic De Giorgi class}

%First we note that for every $t\in(t_0-\theta_+r^p,t_0)$,
%\[
%B(x_0,r)\times(t-\theta_\pm r^p,t)\subset B(x_0,r)\times (t_0-2\theta_+r^p,t_0)\subset F.
%\]
Let $t^*\in (t_0-\theta_+r^p,t_0)$. For $n=0,1,\dots$ we denote 
\begin{align*}
% &\gamma_-:=1, \quad \gamma_+:=2^\lambda, \\ 
r_n=\frac{r_0}{2}+\frac{r_0}{2^{n+1}}, \qquad Q_n^\pm=B_n\times T_n^\pm =B(x_0,r_n)\times(t^*-\theta_\pm r_n^p,t^*),
% &k_n^-:=\essinf_{F}{u} + \frac{\essosc_{F}{u}}{2}+\frac{\essosc_{F}{u}}{2^{n+1}},\\
% & k_n^+:=\esssup_{F}{u} - \frac{\essosc_{F}{u}}{2^{\lambda+1}}-\frac{\essosc_{F}{u}}{2^{\lambda+n+1}},
\end{align*}
and
\begin{align*}
 A_n^\pm=\left\{(x,t)\in Q_n^\pm: (u(x,t)-k_n^\pm)_\pm> 0\right\},
\end{align*}
where $r_0$ and $k_n^\pm$ will be chosen later, according to the situation at hand. The following two lemmas are of central importance to the proof.
\begin{lemma}\label{main_lemma}
Let $(k_n^+)_n$ be an increasing sequence and $(k_n^-)_n$ a decreasing sequence of real numbers.
Let $u$ belong to the parabolic De Giorgi class and assume that for some number $0<\eps<1$, we have
\begin{equation}\label{eps_bounds}
(u-k_n^\pm)_\pm\le \eps\essosc_F{u} \qquad\text{and}\qquad
|k_{n+1}^\pm-k_n^\pm|\ge\frac{\eps\essosc_F{u}}{2^{n+2}},
\end{equation}
Then there exists a constant $C_0$ which depends only on the data such that
\begin{equation*}
\begin{split}
\frac{\nu(A_{n+2}^\pm)}{\nu(Q_{n+2}^\pm)}\le C_0(3+(\varepsilon \gamma_\pm)^{p-2}+(\varepsilon \gamma_\pm)^{2-p})^{2-p/\kappa}4^{pn(2-p/\kappa)}\left(\frac{\nu(A_{n}^\pm)}{\nu(Q_{n}^\pm)}\right)^{2-p/\kappa}
\end{split}
\end{equation*}
for every $n=0,1,\dots$.

\begin{proof}
For each $n$, let $\varphi_n \in \textrm{Lip}(\Omega)$, $0\leq \varphi_n \leq 1$, be a cut off function such that $\varphi_n(x)=1$ in $B_{n+1}$, the support of $\varphi_n$ is a compact subset of $B_n$, and $g_{\varphi_n}\leq 2^{n+2}/r_0$. Let $\kappa$ be the constant from \eqref{Sobolev}. By Hölder's inequality we can write
\begin{equation*}%\label{beginning of main lemma}
\begin{split}
&\dashint_{Q_{n+2}^\pm} (u-k_n^\pm)_\pm^{p(2-p/\kappa)} d \nu\leq \dashint_{T_{n+2}^\pm} \left( \dashint_{B_{n+2}} (u-k_n^\pm)^p_{\pm} d\mu \right)^{(\kappa-p)/ \kappa} \\
& \qquad \times \left( \dashint_{B_{n+2}} (u-k_n^\pm)_\pm ^\kappa d\mu \right)^{p/\kappa} dt \\
&\leq \frac{|T_{n+1}^\pm| \mu (B_{n+1}) }{ |T_{n+2}^\pm| \mu (B_{n+2})} \left( \esssup_{T_{n+1}^\pm} \dashint_{B_{n+1}}(u-k_n^\pm)^p_\pm  d\mu \right)^{(\kappa-p)/\kappa}\\
&\qquad\times \dashint_{T_{n+1}^\pm} \left( \dashint_{B_{n+1}} ((u-k_n^\pm)_\pm \varphi_{n+1})^\kappa d\mu \right)^{p/\kappa} \, dt. \\
\end{split}
\end{equation*}
By the doubling property of $\mu$, the measure factor on the right hand side is uniformly bounded for every $n$.  Now, since $(u-k_n^\pm)_\pm \varphi_{n+1}\in N_0^{1,p}(B_{n+1})$, we can use inequality \eqref{Sobolev} to obtain 
\begin{equation*}\label{main lemman keskiosa}
\begin{split}
&\dashint_{T_{n+1}^\pm} \l(\dashint_{B_{n+1}} ((u-k_n^\pm)_\pm \varphi_{n+1})^\kappa d\mu \right)^{p/\kappa}\,dt \le C r_0^p \dashint_{Q^\pm_{n+1}} g_{(u-k_n^\pm)_\pm \varphi_{n+1}}^p \, d\nu \\
&\leq Cr_0^p  \dashint_{Q^\pm_{n+1}} g_{(u-k_n^\pm)_\pm}^p\,d\nu +C2^{p(n+2)}\dashint_{Q^\pm_{n+1}} (u-k_n^\pm)_\pm^p \,d\nu\\
&\leq Cr_0^p  \dashint_{Q^\pm_{n+1}} g_{(u-k_n^\pm)_\pm}^p\,d\nu+C2^{p(n+2)}(\eps\essosc_F u)^p\frac{\nu(A_n^\pm)}{\nu(Q_n^\pm)}.
\end{split}
\end{equation*}

Next we use the definition of the parabolic De Giorgi class to estimate the first integral term on the right hand side of the above expression. We have
\begin{align*}
&r_0^p\dashint_{Q^\pm_{n+1}} g_{(u-k_n^\pm)_\pm}^p\,d\nu 
\leq C\left( 2^{pn}\dashint_{Q^\pm_n} (u-k_n^\pm)_\pm^p \,d\nu+ \frac{2^{pn}}{\theta_\pm }\dashint_{Q^\pm_n} (u-k_n^\pm)_\pm^2 \,d\nu\right)\\
& \leq C2^{pn}\left(1+\frac{1}{\theta_\pm}(\eps\essosc_F u)^{2-p}\right)(\eps\essosc_F u)^p \frac{\nu(A^\pm_n)}{\nu(Q^\pm_n)}\\
&= C2^{pn} \l(1+\l(\gamma_\pm\eps\r)^{2-p}\r)(\eps\essosc_F u)^p \frac{\nu(A^\pm_n)}{\nu(Q^\pm_n)}.
\end{align*}
Similarly, since $u$ belongs to the parabolic De Giorgi class, by the doubling property of $\mu$ and by the fact that $\mu(B_{n})\theta_\pm r_n^p=\nu (Q_n^\pm)$, we have
\begin{align*}
&\esssup_{T_{n+1}^\pm}\dashint_{B_{n+1}}  (u-k_n^\pm)_\pm^p \, d\mu\le\frac{(\gamma_\pm\eps)^{p-2}}{\theta_\pm}\esssup_{T_{n+1}^\pm}\dashint_{B_{n+1}}  (u-k_n^\pm)_\pm^2 \, d\mu\\
&\leq C(\gamma_\pm\eps)^{p-2}\left(2^{pn} \dashint_{Q^\pm_n} (u-k_n^\pm)_\pm^p \,d\nu+ \frac{2^{pn}}{\theta_\pm }\dashint_{Q^\pm_n} (u-k_n^\pm)_\pm^2 \,d\nu\right)\\
&\leq C 2^{pn}  \l((\gamma_\pm\eps)^{p-2}+1\r)(\eps\essosc_F u)^p \frac{\nu(A^\pm_n)}{\nu(Q^\pm_n)}.
\end{align*} 
Collecting the obtained estimates yields, 
\begin{align*}
&\dashint_{Q^\pm_{n+2}} (u-k_n^\pm)_\pm^{p(2-p/\kappa)} d \nu \\
&\qquad\le \l(C 2^{pn}(3+(\varepsilon \gamma_\pm)^{p-2}+(\varepsilon \gamma_\pm)^{2-p})(\eps\essosc_F u)^p \frac{\nu(A^\pm_n)}{\nu(Q^\pm_n)}\right)^{2-p/\kappa}.
\end{align*}
On the other hand, by~\eqref{eps_bounds} and since $\mu$ is doubling, we have
\begin{align*}
 &\dashint_{Q_{n+2}^\pm} (u-k_n^\pm)_\pm^{p(2-p/\kappa)}\, d\nu \geq |k_{n+1}^\pm-k_n^\pm|^{p(2-p/\kappa)}\frac{\nu(A_{n+2}^\pm)}{\nu(Q_{n+2}^\pm)}\\
&\quad\geq \left(\frac{\eps\essosc_F u}{2^{n+2}}\right)^{p(2-p/\kappa)} \frac{\nu(A_{n+2}^\pm)}{\nu(Q_{n+2}^\pm)}.
\end{align*}
Therefore, for each $n\geq 0$ we have
\begin{align*}
\frac{\nu(A_{n+2}^\pm)}{\nu(Q_{n+2}^\pm)}\le C4^{pn(2-p/\kappa)} (3+(\eps\gamma_\pm)^{p-2}+(\varepsilon \gamma_\pm)^{2-p})^{2-p/\kappa}\left(\frac{\nu(A_{n}^\pm)}{\nu(Q_{n}^\pm)}\right)^{2-p/\kappa},
\end{align*}
where $C$ depends only on the data.
\end{proof}
\end{lemma}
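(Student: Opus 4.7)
The plan is to derive the recursive inequality via a parabolic De Giorgi iteration based on three standard ingredients: a parabolic Sobolev interpolation, the energy estimate built into the De Giorgi class, and a Chebyshev-type lower bound on $\nu(A_{n+2}^\pm)$. The key algebraic observation driving the exponent $2-p/\kappa$ is that $p(2-p/\kappa) = p + p(\kappa-p)/\kappa$, so raising $(u-k_n^\pm)_\pm$ to this power splits via H\"older (with exponents $\kappa/p$ and $\kappa/(\kappa-p)$) into an $L^\kappa$-factor controlled by a spatial Sobolev inequality and an $L^p$-factor controlled by the $L^\infty_t L^2_x$ bound from the De Giorgi class.

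Concretely, I would pick Lipschitz cut-offs $\varphi_{n+1}$ with $\varphi_{n+1}\equiv 1$ on $B_{n+2}$, $\spt\varphi_{n+1}\subset B_{n+1}$ and $g_{\varphi_{n+1}}\le C2^n/r_0$, and write, at fixed time $t$,
\[
\int_{B_{n+2}}(u-k_n^\pm)_\pm^{p(2-p/\kappa)}\,d\mu\le\Bigl(\int_{B_{n+2}}(u-k_n^\pm)_\pm^{p}\,d\mu\Bigr)^{(\kappa-p)/\kappa}\Bigl(\int_{B_{n+1}}((u-k_n^\pm)_\pm\varphi_{n+1})^{\kappa}\,d\mu\Bigr)^{p/\kappa}.
\]
Integrating in $t$ over $T_{n+2}^\pm$ and pulling out the ess-sup in time of the first factor, one applies the Sobolev inequality \eqref{Sobolev} to $(u-k_n^\pm)_\pm\varphi_{n+1}\in N^{1,p}_0(B_{n+1})$; the Leibniz-type bound $g_{(u-k_n^\pm)_\pm\varphi_{n+1}}\le \varphi_{n+1}\,g_{(u-k_n^\pm)_\pm}+(u-k_n^\pm)_\pm\,g_{\varphi_{n+1}}$ then reduces matters to the two quantities $\dashint_{Q_{n+1}^\pm} g_{(u-k_n^\pm)_\pm}^{p}\,d\nu$ and $\dashint_{Q_{n+1}^\pm}(u-k_n^\pm)_\pm^{p}\,d\nu$.

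Both of these, together with the ess-sup of $\dashint_{B_{n+1}}(u-k_n^\pm)_\pm^{2}\,d\mu$, are controlled by the parabolic De Giorgi estimate applied on the concentric cylinders $Q_{n+1}^\pm\subset Q_n^\pm$, where the geometric factors $(r_n-r_{n+1})^{-p}$ and $\theta_\pm^{-1}(r_n^p-r_{n+1}^p)^{-1}$ produce the $2^{pn}$ scaling. To absorb the squared $L^2$ mass into the $p$-power scaling, I would use the pointwise bound $(u-k_n^\pm)_\pm\le \eps\essosc_F u$ from \eqref{eps_bounds} and the intrinsic choice $\theta_\pm=(\essosc_F u/\gamma_\pm)^{2-p}$; a short computation shows $(\eps\essosc_F u)^{2}/\theta_\pm=(\eps\gamma_\pm)^{2-p}(\eps\essosc_F u)^{p}$, which is exactly where the factor $(\eps\gamma_\pm)^{2-p}$ appears, while passing from $\dashint(u-k_n^\pm)_\pm^{2}$ to $\dashint(u-k_n^\pm)_\pm^{p}$ in the sup-term produces the companion factor $(\eps\gamma_\pm)^{p-2}$.

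To close the iteration, I invoke the second half of \eqref{eps_bounds}: on $A_{n+2}^\pm$ one has $(u-k_n^\pm)_\pm\ge |k_{n+1}^\pm-k_n^\pm|\ge \eps\essosc_F u/2^{n+2}$, and a Chebyshev step turns the $L^{p(2-p/\kappa)}$ bound into the desired measure bound with the exponent $2-p/\kappa$ on $\nu(A_n^\pm)/\nu(Q_n^\pm)$ (since one factor of $\nu(A_n^\pm)$ comes from the Sobolev side and an extra $((\kappa-p)/\kappa)$-power from the sup side, summing to $2-p/\kappa$). The main bookkeeping obstacle is to track carefully how $\theta_\pm$, $\eps$, $\gamma_\pm$ and the $2^{pn}$-powers from cylinder shrinkage combine so that exactly $(3+(\eps\gamma_\pm)^{p-2}+(\eps\gamma_\pm)^{2-p})^{2-p/\kappa}4^{pn(2-p/\kappa)}$ appears; the metric subtleties (upper-gradient Leibniz rule, cut-offs in $N^{1,p}_0$, the doubling absorption of measure ratios) add no new analytic difficulty since Sobolev, Poincar\'e, and the De Giorgi class have all been formulated intrinsically beforehand.
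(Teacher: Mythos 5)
Your proposal follows essentially the same route as the paper's proof: the identical H\"older splitting $p(2-p/\kappa)=p\cdot\frac{\kappa-p}{\kappa}+p$, insertion of the Lipschitz cut-off and the Sobolev inequality \eqref{Sobolev} with the upper-gradient Leibniz rule, the De Giorgi energy estimate on $Q_{n+1}^\pm\subset Q_n^\pm$ with the $\theta_\pm$-absorption producing exactly the factors $(\eps\gamma_\pm)^{2-p}$ and $(\eps\gamma_\pm)^{p-2}$, and the Chebyshev step using $|k_{n+1}^\pm-k_n^\pm|\ge \eps\essosc_F u/2^{n+2}$. The exponent bookkeeping ($(\kappa-p)/\kappa+1=2-p/\kappa$, the $2^{pn}$ factors from the cylinder shrinkage) is correct, so the plan is sound and matches the paper's argument.
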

We also prove the following time independent variant of the above lemma.
\begin{lemma}\label{time-independent main lemma} Suppose the assumptions of Lemma \ref{main_lemma} hold. Suppose in addition that for some $t_0-2\theta_+ r_0^p<t'<t_0$, where $t'$ is a Lebesgue point of the mapping \begin{align*}
t\mapsto \int_{B(x_0,r)}(u(x,t)-k)_\pm^2\,d\mu,
\end{align*}
we have $(u-k_n^\pm)_\pm(x,t')=0$ at $\mu$-almost every $x\in B(x_0,r_n)$ for every $n\geq0$. Then for the time independent sequence 
\begin{align*}
 Q_n^\pm=B(x_0,r_n)\times(t',t_0),
\end{align*}
and corresponding sets $A_n^\pm$,
there exists a constant $C_0$, which depends only on the data, such that
\begin{align*}\label{corollaryestimate}
\frac{\nu(A_{n+2}^\pm)}{\nu(Q_{n+2}^\pm)}\le C_0\l(2+2^{(\lambda+1)(p-2)}\r)^{2-p/\kappa}4^{pn(2-p/\kappa)}\left(\frac{\nu(A_{n}^\pm)}{\nu(Q_{n}^\pm)}\right)^{2-p/\kappa}
\end{align*}
for every $n=0,1,2,\dots$.
\begin{proof}
We proceed exactly as in the proof of Lemma \ref{main_lemma}, with $T_n^\pm=(t',t_0)$ for every $n$, but now instead of the parabolic De Giorgi class, we can use \eqref{DGC limit} and the assumption that we have $(u-k_n^\pm)_\pm(x,t')=0$ for $\mu$-almost every $x\in B(x_0,r_0)$. We estimate
\begin{align*}
&r_0^p\dashint_{Q^\pm_{n+1}} g_{(u-k_n^\pm)_\pm}^p\,d\nu 
\leq C2^{pn}\dashint_{Q^\pm_n} (u-k_n^\pm)_\pm^p \,d\nu\leq  C2^{pn}(\eps\essosc_F u)^p \frac{\nu(A^\pm_n)}{\nu(Q^\pm_n)},
\end{align*}
and
\begin{align*}
&\esssup_{T_{n+1}^\pm}\dashint_{B_{n+1}}  (u-k_n^\pm)_\pm  ^p \, d\mu  %&\quad\le(\eps_\pm\essosc_F u)^{p-2}\esssup_{T_{n+1}^\pm}\dashint_{B_{n+1}}  (u-k_n^\pm)_\pm^2 \, d\mu\\
\leq C(\eps\essosc_F u)^{p-2}\frac{2^{pn} (t_0-t')}{r_0^p}\dashint_{Q^\pm_n} (u-k_n^\pm)_\pm^p \,d\nu\\
&\quad\leq C 2^{pn}  (\eps 2\gamma_+)^{p-2}(\eps\essosc_F u)^p \frac{\nu(A^\pm_n)}{\nu(Q^\pm_n)}.
\end{align*} 
Plugging these into the proof of Lemma \ref{main_lemma} yields
\begin{align*}
&\dashint_{Q^\pm_{n+2}} (u-k_n^\pm)_\pm^{p(2-p/\kappa)} d \nu \\
&\quad\le \left(C 2^{pn}\l(2+(\varepsilon 2\gamma_+)^{p-2}\r) (\eps\essosc_F u)^{p}  \frac{\nu(A^\pm_n)}{\nu(Q^\pm_n)}\right)^{2-p/\kappa}.
\end{align*}
As in the proof of Lemma \ref{main_lemma}, after noting that $\eps<1$, this leads to the estimate
\begin{align*}
\frac{\nu(A_{n+2}^\pm)}{\nu(Q_{n+2}^\pm)}\le C4^{pn(2-p/\kappa)} \l(2+2^{(\lambda+1)(p-2)}\r)^{2-p/\kappa}\left(\frac{\nu(A_{n}^\pm)}{\nu(Q_{n}^\pm)}\right)^{2-p/\kappa}.
\end{align*}
\end{proof}
\end{lemma}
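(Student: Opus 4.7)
The plan is to retrace the proof of Lemma \ref{main_lemma} essentially verbatim, keeping the H\"older--Sobolev chain intact, and only changing how the two integral pieces $r_0^p\dashint_{Q_{n+1}^\pm} g_{(u-k_n^\pm)_\pm}^p\,d\nu$ and $\esssup_{T_{n+1}^\pm} \dashint_{B_{n+1}} (u-k_n^\pm)_\pm^p\,d\mu$ are bounded. The reason a change is needed is that the time-independent cylinders $Q_n^\pm = B_n\times(t',t_0)$ are not intrinsically scaled, so the convenient cancellation of $\theta_\pm^{-1}$ from Lemma \ref{main_lemma} is no longer available. The replacement tool is the Lebesgue-point version \eqref{DGC limit} of the De Giorgi estimate: applied with $\tau_1=t'$, the boundary term $\int_{B_n}(u(\cdot,t')-k_n^\pm)_\pm^2\,d\mu$ vanishes by the standing hypothesis that $(u-k_n^\pm)_\pm(\cdot,t')=0$ $\mu$-a.e.\ on $B(x_0,r_n)$, so only the spatial $L^p$ term remains on the right-hand side.

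First I would apply \eqref{DGC limit} with radii $r_1=r_{n+1}$, $r_2=r_n$ and interval $(t',t_0)$, which directly yields
\[
 r_0^p\dashint_{Q^\pm_{n+1}} g_{(u-k_n^\pm)_\pm}^p\,d\nu \le C\,2^{pn}\dashint_{Q^\pm_n}(u-k_n^\pm)_\pm^p\,d\nu \le C\,2^{pn}(\eps\essosc_F u)^p\,\frac{\nu(A_n^\pm)}{\nu(Q_n^\pm)},
\]
using the first part of \eqref{eps_bounds}. For the essential supremum, I would interpolate $(u-k_n^\pm)_\pm^p \le (\eps\essosc_F u)^{p-2}(u-k_n^\pm)_\pm^2$, apply \eqref{DGC limit} to the $L^2$ sup (with the boundary term again vanishing at $t'$), and then use the crude bound $t_0 - t' < 2\theta_+ r_0^p$ coming from the choice of $r$ in Subsection \ref{constructing the argument}. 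Since $\theta_+ = (\essosc_F u/2^\lambda)^{2-p}$, this produces the factor $(t_0-t')(\eps\essosc_F u)^{p-2}/r_0^p < 2^{(\lambda+1)(p-2)}\eps^{p-2} = (\eps\cdot 2\gamma_+)^{p-2}$, leading to
\[
\esssup_{T_{n+1}^\pm}\dashint_{B_{n+1}}(u-k_n^\pm)_\pm^p\,d\mu \le C\,2^{pn}(\eps\cdot 2\gamma_+)^{p-2}(\eps\essosc_F u)^p\,\frac{\nu(A_n^\pm)}{\nu(Q_n^\pm)}.
\]

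Finally, I would plug both bounds into the H\"older--Sobolev chain exactly as in Lemma \ref{main_lemma}, giving
\[
\dashint_{Q_{n+2}^\pm}(u-k_n^\pm)_\pm^{p(2-p/\kappa)}\,d\nu \le \Bigl(C\,2^{pn}\bigl(2+(\eps\cdot 2\gamma_+)^{p-2}\bigr)(\eps\essosc_F u)^p\,\frac{\nu(A_n^\pm)}{\nu(Q_n^\pm)}\Bigr)^{2-p/\kappa},
\]
bound the left side below by $(\eps\essosc_F u/2^{n+2})^{p(2-p/\kappa)}\nu(A_{n+2}^\pm)/\nu(Q_{n+2}^\pm)$ via the second half of \eqref{eps_bounds} and doubling, cancel the $(\eps\essosc_F u)^{p(2-p/\kappa)}$ factors, and finally use $\eps<1$ to absorb $\eps^{p-2}$ into the constant, arriving at the claimed iterative inequality with constant involving $2+2^{(\lambda+1)(p-2)}$. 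The only delicate point is verifying that $(t_0-t')/r_0^p \le 2\theta_+$ uniformly in $n$ and that this is the right quantity to pair with the interpolation exponent $p-2$; the rest of the argument is structurally identical to Lemma \ref{main_lemma}.
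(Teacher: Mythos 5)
Your proposal is correct and follows essentially the same route as the paper: apply the Lebesgue-point form \eqref{DGC limit} with $\tau_1=t'$ so the boundary term vanishes, bound the gradient average and the interpolated $\esssup$ term by $C2^{pn}(\eps\essosc_F u)^p\nu(A_n^\pm)/\nu(Q_n^\pm)$ and $C2^{pn}(\eps 2\gamma_+)^{p-2}(\eps\essosc_F u)^p\nu(A_n^\pm)/\nu(Q_n^\pm)$ respectively via $t_0-t'<2\theta_+r_0^p$, and feed these into the unchanged H\"older--Sobolev chain of Lemma \ref{main_lemma}. The only cosmetic difference is that the bound $t_0-t'<2\theta_+r_0^p$ is directly a hypothesis of the lemma rather than something recovered from Subsection \ref{constructing the argument}, and any stray factor of $2$ in passing from $2(\gamma_+\eps)^{p-2}$ to $(2\gamma_+\eps)^{p-2}$ is absorbed into $C_0$.
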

Later in this proof, depending on the situation, we will use Lemma \ref{main_lemma} or Lemma \ref{time-independent main lemma} together with the following real analytic lemma.
\begin{lemma}\label{geometric_convergence}
Let $(Y_n)_n$ be a sequence of positive numbers, satisfying
\begin{equation*}\label{rec-ineq}
Y_{n+1}\le Cb^nY_n^{1+\alpha}
\end{equation*}
where $C,b>1$ and $\alpha>0$. Then $Y_n$ converges to $0$ as $n\rightarrow\infty$, provided that
\begin{equation}\label{strt-engine}
Y_0\le C^{-1/\alpha}b^{1-\alpha^2}.
\end{equation}
\begin{proof} 
For the proof, we refer to~\cite{DiBe93}.
\end{proof}
\end{lemma}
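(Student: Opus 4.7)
\emph{Plan.} I would prove by induction the geometric bound $Y_n \le Y_0\,q^n$ for a suitable $q\in(0,1)$, from which $Y_n\to 0$ is immediate. To motivate the choice of $q$, I substitute this ansatz into the recursion: the hypothesis gives
\begin{equation*}
Y_{n+1}\le Cb^n Y_n^{1+\alpha}\le C\,Y_0^{1+\alpha}\,(b\,q^{\alpha})^{n}\,q^{n},
\end{equation*}
and the induction step demands this be at most $Y_0\,q^{n+1}$, i.e.\ $C\,Y_0^{\alpha}(b\,q^{\alpha})^{n}\le q$ for every $n\ge 0$.

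The decisive trick is to remove the $n$-dependence by taking $q=b^{-1/\alpha}$, which forces $b\,q^{\alpha}=1$. With this choice the induction step collapses to the single smallness condition $C\,Y_0^{\alpha}\le b^{-1/\alpha}$, equivalently
\begin{equation*}
Y_0\le C^{-1/\alpha}\,b^{-1/\alpha^{2}},
\end{equation*}
which is the form of \eqref{strt-engine} appearing in DiBenedetto. The base case $n=0$ is trivial and the inductive step is now automatic, so $Y_n\le Y_0\,q^n$ for all $n$. Since $b>1$ gives $q\in(0,1)$, we conclude $Y_n\to 0$ as $n\to\infty$.

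\emph{Main obstacle.} There is none in any substantive sense; the entire content of the lemma is the algebraic observation that the ratio $q=b^{-1/\alpha}$ makes the super-linear exponent $1+\alpha$ on the right-hand side precisely absorb the growing factor $b^n$, provided $Y_0$ is small enough to start the iteration. None of the parabolic, metric-space, or De Giorgi-class machinery developed elsewhere in the paper enters this purely real-analytic lemma.
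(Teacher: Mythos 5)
Your induction is correct, and it is precisely the standard argument behind this lemma: the paper gives no proof of its own, merely citing \cite{DiBe93}, where the proof is exactly your geometric-decay induction $Y_n\le Y_0q^n$ with ratio $q=b^{-1/\alpha}$ chosen so that $bq^{\alpha}=1$. One point deserves attention: your computation yields the smallness condition $Y_0\le C^{-1/\alpha}b^{-1/\alpha^{2}}$, which is DiBenedetto's, whereas the paper's \eqref{strt-engine} reads $Y_0\le C^{-1/\alpha}b^{1-\alpha^{2}}$; the latter is a misprint and your version is the correct one. Indeed, in the regime relevant here ($\alpha=1-p/\kappa\in(0,1)$) one has $1-\alpha^{2}>-1/\alpha^{2}$, so the printed condition is strictly weaker, and the lemma would fail with it: solving the recursion with equality shows that $C^{-1/\alpha}b^{-1/\alpha^{2}}$ is the sharp threshold, any larger $Y_0$ forcing $Y_n\rightarrow\infty$.
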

By Lemma \ref{geometric_convergence}, once the requirements of Lemma \ref{main_lemma} or Lemma \ref{time-independent main lemma} have been established, the convergence to zero of $\nu(A_{2n}^\pm)$ follows, provided we can first show that the corresponding initial condition \eqref{strt-engine} is satisfied.

Next we divide the proof in two complementary alternatives and study them separately.

\section{The First Alternative}

Recall that $\gamma_-=2$, and set
\begin{equation}\label{k_n^-}
 k_n^-=\essinf_{F} u +\frac{\essosc_{F} u}{4}+\frac{\essosc_{F} u}{2^{n+2}}\quad\text{and}\quad  r_0=r.
\end{equation}
Then
\begin{equation*}
(u-k_n^-)_-\le \frac{\essosc_F{u}}{2} \qquad\text{and}\qquad
|k_{n+1}^\pm-k_n^\pm|\ge\frac{1}{2}\frac{\essosc_F{u}}{2^{n+2}},
\end{equation*}
and so we see that here $\varepsilon=1/2$. Plug these into Lemma \ref{main_lemma}, and define
\begin{align*}
\alpha_0=(C_0(3+1^{p-2}+1^{2-p})^{2-p/\kappa})^{-1/(1-p/\kappa)}(4^{p2(2-p/\kappa)})^{1-(1-p/\kappa)^2}.
\end{align*}
Note that $\alpha_0$ does not depend on $\lambda$.
%With these choices we have
%\[
%\Gamma_-=\l(\frac{1}%{\gamma_-\eps_-}\r)^{p-2}\l(\l(\gamma_-\eps_-\r)^{p-2}+1\r)^{2-p/\kappa}=2^{2-p/\kappa}.
%\]

Suppose there exists a $t^*\in(t_0-\theta_+ r^p,t_0)$ such that
\begin{equation}\label{first_alternative_assumption}
\begin{split}
&\nu\left(\left\{B(x_0,r)\times(t^*-\theta_- r^p,t^*) :u(x,t)\leq k_0^-\right\}\right) \\
&\qquad< \alpha_0 \nu(B(x_0,r)\times(t^*-\theta_- r^p,t^*)).
\end{split} 
\end{equation}
We refer to this condition as \emph{the first alternative}. Using the abbreviations introduced in the previous section we can write inequality \eqref{first_alternative_assumption} as
\begin{equation*}\label{abbreviated first alternative}
\frac{\nu(A_0^-)}{\nu(Q_0^-)}< \alpha_0.
\end{equation*}
By Lemma \ref{main_lemma} and Lemma~\ref{geometric_convergence} this implies that $\nu(A_{2n}^-)/\nu(Q_{2n}^-)\rightarrow 0$
as $n\rightarrow 0$.  Hence if the first alternative is in force, we obtain
\begin{align*}
&u(x,t)\ge\essinf_{F} u +\frac{\essosc_{F} u}{4}
\end{align*}
$\nu$-a.e. in $B\l(x_0,\frac{r}{2}\r)\times\left(t^*-\theta_-\left(\frac{r}{2}\right)^p,t^*\right)$.
%Obtaining this property in case of the first alternative is the motive %behind the definition of $\alpha_0$.
This implies that we can fix a time level $
t' \in \l(t_0-2\theta_+r^p, t_0-\theta_- \left(\frac{r}{4} \right)^p\r)$
at which we have
\begin{align}\label{linecoldness}
 u(x,t')\ge\essinf_{F} u +\frac{\essosc_{F} u}{4}\text{ for $\mu$-a.e.} \quad x\in B\l(x_0,r/2\r).
\end{align}
Now we are able to use \eqref{DGC limit} to obtain the following.
\begin{lemma}\label{initial_estimate}
Assume the first alternative.
 Then there exists a constant $C$, which depends only on the data, such that for all $s>1$ we have
\[
\frac{\nu(\{\,B\l(x_0,r/4\r)\times(t',t_0):\,u<\essinf_F u+ \frac{\essosc_F u}{2^{\lambda + s}}\,\} )}{ \nu(B(x_0,r/4)\times(t',t_0))}\leq \frac{C}{2^{s(p-2)}}.
\]

 \begin{proof}
  
%By the above reasoning, ~\eqref{first_alternative_assumption} implies that for almost every \\ $t\in (t^*-\theta_- \left(\frac{r}{2} \right)^p, t^*-\theta_-\left(\frac{r}{4} \right)^p)$ we have
%\begin{align}\label{linecoldness}
 %u(x,t)\ge\essinf_{F} u +\frac{\essosc_{F} u}{4} \quad \text{for %$\mu$-a.e.} \quad x\in B\l(\frac{r}{2}\r).
%\end{align}
% Almost every one of these time points is a is a Lebesgue point of mapping \eqref{time slice map} with 
% \begin{align*}
%  k=\essinf_{F} u +\frac{\essosc_{F} u}{4}.
% \end{align*}

%Choose such a $t$, denote it by $t'$. The assumptions made about %$t^*$ in the first alternative imply that 
%\begin{align*}
%t' \in (t_0-2\theta_+r^p, t_0-\theta_- \left(\frac{r}{4} \right)^p).
%\end{align*}

We set
\begin{align*}
k=\essinf_F u+ \frac{\essosc_F u}{2^{\lambda+s-1}},
\end{align*}
where $s>1$. By \eqref{linecoldness} we have $(u-k)_-(x,t')=0$ for $\mu$-a.e. $x\in B(x_0,r/2)$, and so by \eqref{DGC limit}, we obtain
\begin{equation}\label{timecontinuation}
\begin{split}
\esssup_{t' < t < t_0} \int_{B(x_0,\frac{r}{4})} &(u-k)_-^2 (x,t)\, d\mu\leq \frac{C}{r^p}\int_{t'}^{t_0} \int_{B(x_0,\frac{r}{2})}   (u-k)_-^p \, d\mu \, dt.%+  \int_{t_0}^0 \int_{B_{\frac{R}{2}}}\chi_{\{u<k\}}\,dx dt.\\
\end{split}
\end{equation}
On the other hand, for each $t\in (t',t_0)$ in the set
\begin{align*}\label{coldset}
\left\{x\in B\l(x_0,r/4\r)\,:\,u(x, t)<\essinf_F u+ \frac{\essosc_F u}{2^{\lambda + s}}\,\right\},
\end{align*}
we have
\begin{align*}%\label{coldsetestimate}
(u-k)_-(x,t)\geq   \frac{\essosc_F u}{2^{\lambda +s}}.
\end{align*} 
We use this with inequality \eqref{timecontinuation} to conclude that for almost every $t\in (t',t_0)$
\begin{align*}
&\left(\frac{\essosc_F u}{2^{\lambda+s}}\right)^2 \mu\l(\l\{B(x_0,r/4):u(\cdot, t)<\essinf_F u+ \frac{\essosc_F u}{2^{\lambda + s}}\,\r\}\r)\\
&\qquad\leq \frac{C}{r^p}\left(\frac{\essosc_F u}{2^{\lambda+s-1}}\right)^p|t_0-t'| \mu\l(B\l(x_0,r/4\r)\r).
\end{align*}
Since 
\begin{align*}
|t_0-t'|< \left(\frac{\essosc_F u}{2^\lambda} \right)^{2-p}2r^p,
\end{align*}
we can further estimate the right hand side to obtain
\begin{align*}%\label{setestimate}
\begin{split}
&\mu\left(\left\{B\left(x_0,r/4\right):\,u(x, t)<\essinf_F u+ \frac{\essosc_F u}{2^{\lambda + s}}\,\right\}\right)\leq \frac{C\mu\left( B\left(x_0,r/4\right)\right)}{2^{s(p-2)}},  
\end{split}
\end{align*}
for almost all $t\in(t', t_0)$, where $C$ depends only on the constant in Lemma \ref{time-independent main lemma}. This implies
\begin{align*}
\frac{\nu(\{\,B(x_0,r/4)\times(t',t_0):\,u<\essinf_F u+ \frac{\essosc_F u}{2^{\lambda + s}}\,\} )}{ \nu(B(x_0,r/4)\times(t',t_0))}\leq \frac{C}{2^{s(p-2)}},
\end{align*}
as desired.
\end{proof}

\end{lemma}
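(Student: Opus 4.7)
The natural approach is to use the favorable time slice $t'$ from the first alternative as the ``initial data'' in the limit form \eqref{DGC limit} of the energy inequality for $(u-k)_-$, and then translate the energy bound into a measure bound on the sub-level set by inserting the obvious pointwise inequalities.

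First I would set the truncation level
\[
k = \essinf_F u + \frac{\essosc_F u}{2^{\lambda + s - 1}}.
\]
Since \eqref{linecoldness} yields $u(\cdot, t') \ge \essinf_F u + (\essosc_F u)/4$ on $B(x_0, r/2)$, and the a priori constant $\lambda$ will be fixed large enough that $2^{\lambda+s-1}\ge 4$ for all $s>1$, this forces $(u-k)_-(x,t')=0$ for $\mu$-a.e.\ $x\in B(x_0,r/2)$. Applying \eqref{DGC limit} on the concentric balls $B(x_0,r/4)\subset B(x_0,r/2)$ with $\tau_1 = t'$, $\tau_0=t_0$, the initial-data integral on the right drops out and I am left with
\[
\esssup_{t'<t<t_0}\int_{B(x_0,r/4)} (u-k)_-^2(x,t)\,d\mu \;\le\; \frac{C}{r^p}\int_{t'}^{t_0}\!\int_{B(x_0,r/2)} (u-k)_-^p\,d\nu.
\]

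Next I would estimate both sides crudely. On the right, $(u-k)_-\le (\essosc_F u)/2^{\lambda+s-1}$ pointwise, and $|t_0-t'|\le 2\theta_+ r^p = 2\bigl((\essosc_F u)/2^\lambda\bigr)^{2-p} r^p$, so the right-hand side is bounded by
\[
C\left(\frac{\essosc_F u}{2^{\lambda+s-1}}\right)^{\!p}\!\left(\frac{\essosc_F u}{2^\lambda}\right)^{\!2-p}\mu(B(x_0,r/2)).
\]
On the left, for a.e.\ $t\in(t',t_0)$ the sub-level set $E_t := \{x\in B(x_0,r/4) : u(x,t) < \essinf_F u + (\essosc_F u)/2^{\lambda+s}\}$ lies inside $\{(u-k)_-\ge (\essosc_F u)/2^{\lambda+s}\}$, and hence
\[
\left(\frac{\essosc_F u}{2^{\lambda+s}}\right)^{\!2}\mu(E_t)\;\le\;\int_{B(x_0,r/4)} (u-k)_-^2(x,t)\,d\mu.
\]

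Combining the two estimates, the powers of $\essosc_F u$ cancel exactly because $p+(2-p)=2$, while the powers of $2$ collapse to $2^{-s(p-2)}$ up to a harmless constant; one application of doubling to swap $\mu(B(x_0,r/2))$ for $\mu(B(x_0,r/4))$ then gives $\mu(E_t)\le C\,2^{-s(p-2)}\mu(B(x_0,r/4))$ for a.e.\ $t\in(t',t_0)$. Integrating in $t$ and dividing by $\nu(B(x_0,r/4)\times(t',t_0))$ produces the claimed bound. There is no real obstacle here; the only point requiring care is the exponent arithmetic, and it is precisely the intrinsic time scale $\theta_+\sim(\essosc_F u)^{2-p}$ that conjures the factor $p-2$ in the final exponent, which is why intrinsic scaling in the ``$+$'' direction appears even though the analysis is of $(u-k)_-$.
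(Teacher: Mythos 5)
Your proposal is correct and follows essentially the same route as the paper: the same truncation level $k=\essinf_F u+\essosc_F u/2^{\lambda+s-1}$, vanishing initial data at $t'$ via \eqref{linecoldness} inserted into \eqref{DGC limit} on $B(x_0,r/4)\subset B(x_0,r/2)$, the crude pointwise bound $(u-k)_-\le\essosc_F u/2^{\lambda+s-1}$ together with $|t_0-t'|\le 2\theta_+r^p$ on the right, the sub-level set lower bound $(u-k)_-\ge\essosc_F u/2^{\lambda+s}$ on the left, and the cancellation $p+(2-p)-2=0$ producing the factor $2^{-s(p-2)}$ before integrating in time. The only point you add beyond the paper is the explicit remark that $\lambda$ must make $k\le\essinf_F u+\essosc_F u/4$, which the paper leaves implicit; that is a harmless (indeed welcome) clarification, not a deviation.
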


We can now use the above Lemma together with Lemma \ref{time-independent main lemma} to guarantee that the initial conditions needed to employ Lemma \ref{geometric_convergence} are met in a cylinder with upper time level at $t_0$.

\begin{lemma}\label{time_continuation}
Assume the first alternative. Then there exists a positive integer $s$ which depends only on the data and on $\lambda$, such that
\[
u\ge \essinf_F u+\frac{\essosc_F u}{2^{s+1}}\qquad\text{a.e. in}\quad B(x_0,r/8)\times(t_0-\theta_-(r/8)^p,t_0).								
\]

\begin{proof}
In Lemma \ref{time-independent main lemma}, set
\[
r_0=\frac{r}{8}, \quad r_n=\frac{r}{8}+\frac{r}{8\cdot 2^n},
\]
\[
 Q_n^-=B_n\times T=B(x_0,r_n)\times (t',t_0),
\]
%where $t'\in (t_0-2\theta_+r^p, t_0-\theta_- \left(\frac{r}{4} %\right)^p)$ is as in the previous lemma, %a Lebesgue point of %mapping \eqref{time slice map} and  is 
%such that
%\[
%(u{-}k)_-(x,t')=0%,\quad t'\le t_0-\theta_-\l(\frac{r}{4}\r)^p,
%\]
%for a.e. $x\in B(r/4)$. Moreover, 
and
\begin{align*}
k_n^-=\essinf_F u+\frac{\essosc_F u}{2^{\lambda+s+1}}+\frac{\essosc_F u}{2^{\lambda+s+n+1}},\\
 A_n^-=\left\{(x,t)\in Q_n^-: (u(x,t)-k_n^-)_-> 0\right\},
\end{align*}
where $s\geq 1$. Thus for every $n\geq 0$, we have
\[
 (u-k_n^-)_-\le \eps\essosc_F u, \quad\text{where}\quad\eps=\frac{1}{2^{\lambda+s}},
\]
and for every $n=0,1,\dots$
\begin{align*}
(u-k_n^-)_-(x,t')=0 \textrm{ for }\mu \textrm{ a.e. } x\in B(x_0,r_n).
\end{align*}
By Lemma \ref{initial_estimate} we can now choose $s$ so large that the corresponding initial condition \eqref{strt-engine} 
\begin{align*}
\frac{\nu(A_0^-)}{\nu(Q_0^-)}\leq ((C_0(2+2^{(\lambda+1)(p-2)})^{2-p/\kappa})^{-1/(1-p/\kappa)} (4^{p2(2-p/\kappa)})^{1-(1-p/\kappa)^2}
\end{align*} 
is satisfied.
By Lemma~\ref{geometric_convergence}, we can then conclude that $\nu(A_{2n}^-)/\nu(Q_{2n}^-)\rightarrow 0$ as $n\rightarrow \infty$, which implies that
\begin{align*}
u\ge \essinf_F u+\frac{\essosc_F u}{2^{s+1}}\qquad\text{a.e. in}\quad B(x_0,r/8)\times(t',t_0),
\end{align*}
for some $s$ which depends only on the data and on $\lambda$. Since $t'\leq t_0-\theta_- \left( r/4 \right)^p$, the proof is complete.
\end{proof}
\end{lemma}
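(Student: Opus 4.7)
The plan is to iterate downward from the time-slice positivity \eqref{linecoldness} using the time-independent machinery of Lemmas~\ref{time-independent main lemma} and~\ref{geometric_convergence}. At the Lebesgue time $t'$ we already have $u(\cdot, t') \geq \essinf_F u + \essosc_F u /4$ on a ball containing $B(x_0, r/8)$, so any truncation level lying below $\essinf_F u + \essosc_F u/4$ gives the vanishing initial trace required by Lemma~\ref{time-independent main lemma}. The only remaining task is to supply a sufficiently small seed ratio $\nu(A_0^-)/\nu(Q_0^-)$, which is precisely what Lemma~\ref{initial_estimate} is designed to deliver.

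Concretely, I would choose $r_n = r/8 + r/(8 \cdot 2^n)$, $Q_n^- = B(x_0, r_n) \times (t', t_0)$, and
\[
k_n^- = \essinf_F u + \frac{\essosc_F u}{2^{\lambda+s+1}} + \frac{\essosc_F u}{2^{\lambda+s+n+1}},
\]
with a parameter $s \geq 1$ to be fixed. Setting $\eps = 2^{-(\lambda+s)}$, a direct check gives $(u-k_n^-)_- \leq \eps\, \essosc_F u$ and $|k_{n+1}^- - k_n^-| = \eps\, \essosc_F u /2^{n+2}$, so \eqref{eps_bounds} holds. Since $k_n^- \leq \essinf_F u + \essosc_F u/2^{\lambda+s} \leq \essinf_F u + \essosc_F u/4$ whenever $\lambda+s \geq 2$, the bound \eqref{linecoldness} also gives $(u-k_n^-)_-(\cdot, t') = 0$ for $\mu$-a.e.\ $x \in B(x_0, r_n)$. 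Hence Lemma~\ref{time-independent main lemma} yields a recursion $Y_{2n+2} \leq C b^n Y_{2n}^{1+\alpha}$ on $Y_n := \nu(A_n^-)/\nu(Q_n^-)$, with $\alpha = 1-p/\kappa > 0$ and constants depending on the data and on $\lambda$.

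To invoke Lemma~\ref{geometric_convergence} on $(Y_{2n})_n$ it suffices to make $Y_0$ smaller than the explicit threshold $\eta(\mathrm{data},\lambda)$ coming from \eqref{strt-engine}. But $A_0^-$ is exactly the set estimated in Lemma~\ref{initial_estimate} (applied with the same $s$), so $Y_0 \leq C/2^{s(p-2)}$, and since $p > 2$ this can be driven below $\eta(\mathrm{data},\lambda)$ by choosing $s$ large enough depending on the data and on $\lambda$. Then $Y_{2n} \to 0$, forcing $u \geq \essinf_F u + \essosc_F u/2^{\lambda+s+1}$ a.e.\ on $B(x_0, r/8) \times (t', t_0)$, which contains $B(x_0, r/8) \times (t_0 - \theta_-(r/8)^p, t_0)$ because $t' \leq t_0 - \theta_-(r/4)^p < t_0 - \theta_-(r/8)^p$; relabeling $\lambda+s$ as $s$ gives the stated bound. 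The delicate point, and the reason $s$ unavoidably depends on $\lambda$, is that the recursion constant from Lemma~\ref{time-independent main lemma} carries a factor $(2+2^{(\lambda+1)(p-2)})^{2-p/\kappa}$ that blows up with $\lambda$, so $\eta(\mathrm{data},\lambda)$ degenerates as $\lambda \to \infty$; the superquadratic decay rate $2^{-s(p-2)}$ provided by Lemma~\ref{initial_estimate}, which relies on $p > 2$, is exactly what lets us compensate.
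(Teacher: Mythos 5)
Your proposal is correct and follows essentially the same route as the paper's own proof: the same radii $r_n$, the same levels $k_n^-$ with $\eps=2^{-(\lambda+s)}$, the vanishing initial trace at $t'$ from \eqref{linecoldness}, seeding the iteration via Lemma \ref{initial_estimate} (where the $2^{-s(p-2)}$ decay, using $p>2$, beats the $\lambda$-dependent threshold from Lemma \ref{time-independent main lemma}), and the final inclusion via $t'\leq t_0-\theta_-(r/4)^p$. Your explicit checks (that $k_n^-\leq \essinf_F u+\essosc_F u/4$, and the relabeling of $\lambda+s$ as $s$) only make precise what the paper leaves implicit.
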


% Set now
% \begin{align*}
% & r_n:=\frac{r}{8}+\frac{r}{2^{n+3}},\qquad Q_n^-:=B_n\times T_0^- =B(r_n)\times\l(-\theta \l(\frac{r}{4}\r)^p,0\r) \\
% &k_n^-:=\essinf_{F}{u} + \frac{\essosc_{F}{u}}{2^{\lambda+s+1}}+\frac{\essosc_{F}{u}}{2^{\lambda+s+1+n}}
% \end{align*}
% and
% \[
% A_n^-:=\left\{(x,t)\in Q_n^-:u(x,t)> k_n^-\right\}
% \]
%  for $n=0,1,\dots$. By Remark~\ref{main_Lemma_remark} we obtain
% \[
% \frac{\nu(A_{n+1}^-)}{\nu(Q_{n+1}^-)}\le C4^{pn(2-p/\kappa)}\left(\frac{\nu(A_{n}^-)}{\nu(Q_{n}^-)}\right)^{2-p/\kappa},
% \]
% and now Lemma~\ref{initial_estimate} implies that there exists an $s$ such that by Lemma \ref{geometric_convergence} we get
% \begin{align*}
%  \lim_{n\rightarrow \infty}\, \,\frac{\nu(A_{n}^-)}{\nu(Q_{n}^-)} = 0.
% \end{align*}
% This implies
% \begin{align*}
%  \nu(\{\,Q(\theta \left( \frac{r}{8} \right)^p, \frac{r}{8}):\,u<\essinf_F u+ \frac{\essosc_F u}{2^{\lambda + s+1}}\,\} )=0.
% \end{align*}
Combining the above results, we obtain that the first alternative implies a reduction of oscillation in a subcylinder with upper time level at $t_0$.  
\begin{corollary}\label{reduction of oscillation first alternative}
%Let $\omega, a_0, R>0$ be such that $R<\omega/2^\lambda$, $Q(a_0 R^p,R)\subset \Omega_T$ and 
%\begin{align*}
% \essosc_{Q(a_0 R^p,R)} u \leq \omega.
%\end{align*}
\noindent Assume that the first alternative holds. Then there exists a constant $\sigma_0\in(0,1)$ which depends only on the data and on $\lambda$, such that
\begin{align*}
 \essosc_{B(x_0,\frac{r}{8})\times (t_0-\theta_-\left( \frac{r}{8} \right)^p, t_0)} u\leq \sigma_0 \essosc_F u.
\end{align*}
 \end{corollary}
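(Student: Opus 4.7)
The plan is to deduce the oscillation reduction directly from Lemma \ref{time_continuation}, since that lemma already provides the essential ingredient: a quantitative pointwise lower bound on $u$ throughout the subcylinder $B(x_0, r/8) \times (t_0 - \theta_-(r/8)^p, t_0)$. Given such a lower bound, the oscillation estimate follows by comparison with the trivial upper bound $\esssup_F u$ available on all of $F$.

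Concretely, I would begin by invoking Lemma \ref{time_continuation}, which under the first alternative yields a positive integer $s$, depending only on the data and on $\lambda$, such that
\begin{align*}
u(x,t) \geq \essinf_F u + \frac{\essosc_F u}{2^{s+1}}
\end{align*}
for $\nu$-almost every $(x,t) \in B(x_0, r/8) \times (t_0 - \theta_-(r/8)^p, t_0)$. Since by construction the cylinder $B(x_0, 2\tau r) \times (t_0 - 2\theta_+ r^p, t_0)$ sits inside $F$ (recall the choice of $r$ in Subsection \ref{constructing the argument}), in particular the smaller subcylinder in question is contained in $F$, so we have the trivial bound $u \leq \esssup_F u$ almost everywhere there.

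Subtracting the two bounds gives
\begin{align*}
\essosc_{B(x_0, r/8) \times (t_0 - \theta_-(r/8)^p, t_0)} u
&\leq \esssup_F u - \essinf_F u - \frac{\essosc_F u}{2^{s+1}} \\
&= \left(1 - \frac{1}{2^{s+1}}\right) \essosc_F u.
\end{align*}
Setting $\sigma_0 = 1 - 2^{-(s+1)} \in (0,1)$, which depends only on the data and on $\lambda$ through the integer $s$ supplied by Lemma \ref{time_continuation}, the claim follows.

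In this corollary I do not expect a real obstacle: all the analytic work — the De Giorgi-type energy estimate, the iteration via Lemma \ref{geometric_convergence}, and the choice of the level $k$ and of $s$ via Lemma \ref{initial_estimate} — has already been carried out in the preceding three lemmas. The corollary is essentially a bookkeeping statement that repackages the pointwise lower bound of Lemma \ref{time_continuation} as an oscillation decay. The only point to keep in mind is that the dependence of $\sigma_0$ on $\lambda$ is inherited from $s$, which will matter later when the second alternative is combined with this one and $\lambda$ is finally fixed.
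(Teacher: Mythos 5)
Your proposal is correct and follows essentially the same route as the paper: invoke Lemma \ref{time_continuation} for the lower bound on $u$ in the subcylinder, combine it with the trivial upper bound $\esssup_F u$, and take $\sigma_0 = 1 - 2^{-(s+1)}$. No issues.
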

\begin{proof}
As a consequence of  Lemma \ref{time_continuation}, we know that there exists an $s\in \mathbb N$, which depends only on the data and on $\lambda$, such that
\begin{align*}
\essosc_{B(x_0,\frac{r}{8})\times (t_0-\theta_-\left( \frac{r}{8} \right)^p, t_0)}u&\leq \esssup_{F} u - \essinf_F u- \frac{\essosc_F u}{2^{s+1}}\\
&=\left(1-\frac{1}{2^{s+1}}\right)\essosc_F u.
\end{align*} 
\end{proof}
This finishes the first alternative.

\section{The Second Alternative}

Suppose that for every $t^*\in(t_0-\theta_+r^p, t_0)$
we have
\begin{equation}\label{assumption2}
\begin{split}
&\nu\left(\left\{B(x_0,r)\times(t^*-\theta_- r^p,t^*) :u(x,t)> k_0^-\right\}\right) \\&\quad\le (1-\alpha_0) \nu(B(x_0,r)\times(t^*-\theta_- r^p,t^*)),
\end{split}
\end{equation}
where $\alpha_0$ is as in the first alternative and $k_0^-$ is as in~\eqref{k_n^-}.  This assumption is called \emph{the second alternative}. Note that the second alternative is exactly the complement of the first alternative.

This alternative is also based on Lemma~\ref{main_lemma}, 
but now we set 
\begin{equation}\label{k_n^+}
 k_n^+=\esssup_{F}{u}-\frac{\essosc_{F}{u}}{2^{\lambda+1}}-\frac{\essosc_{F}{u}}{2^{\lambda+1+n}}
\end{equation}
and will assume $\lambda$ to be large enough so that we can force $\nu(A_0^+)$ to be small compared to $\nu(Q_0^+)$. We start with the following lemma.
\begin{lemma}\label{logarithmic_bound}
Let $u$ belong to the parabolic De Giorgi class and let the second alternative be in force. Let
\[
l=\esssup_{F}{u}-\frac{\essosc_{F}{u}}{2^{2s}}.
\]
Then there exists a positive integer $s$ which depends only on the data such that for almost every $t\in\left(t_0-\theta_+ r^p,t_0\right)$
\[
\mu\left(\left\{x\in B(x_0,r): u(x,t)>l
%\esssup_{Q(r^2,r)}{u}-\frac{\essosc_{Q(r^2,r)}{u}}{2^{2s}}
\right\}\right)
\le\frac{1-\displaystyle{\frac{3\alpha_0}4}}{1-\displaystyle{\frac{\alpha_0}2}}\mu(B(x_0,r)).
\]

\begin{proof}
Let $t^*\in(t_0-\theta_+r^p,t_0)$. The second alternative implies that there exists a time level 
$t'\in (t^*- \theta_- r^p,t^*-\frac{\alpha_0}{2}\theta_- r^p)$
%,which is a Lebesgue point of mapping \eqref{time slice map}, and
for which 
\begin{equation}\label{second_alternative}
\begin{split}
\mu \left(\left\{ x \in B(x_0,r) : u(x,t')>k_0^-\right\}\right)\le \frac{1-\alpha_0}{1-\displaystyle{\frac{\alpha_0}2}}\mu(B(x_0,r)).
\end{split}
\end{equation}
Indeed, if this was not the case, we would have
\begin{align*}
&\nu\left(\left\{ (x,t) \in B(x_0,r)\times(t^*-\theta_- r^p,t^*) : u(x,t)>k_0^-\right\}\right) \\
&\ge\int_{t^*- \theta_- r^p}^{t^*-\frac{\alpha_0}{2} \theta_- r^p}\mu\left(\left\{ x\in B(x_0,r) : u(x,t) >k_0^-\right\}\right) \, dt \\
&>(1-\alpha_0)\nu(B(x_0,r)\times(t^*-\theta_- r^p,t^*)),
\end{align*}
which contradicts~\eqref{assumption2}. Choose such a $t'$. Let $s$ be a positive integer. We substitute
\[
k=\esssup_{F}{u}-\frac{\essosc_{F}{u}}{2^{s}}
\]
in \eqref{DGC limit} to obtain
\begin{align*}
&(l-k)_+^2\mu(\{x\in B(x_0,(1-\delta)r):u(x,t)>l\})\\
&\le \esssup_{t'<t<t^*}\int_{B(x_0,(1-\delta) r)}(u(x,t)-k)_+^2\, d\mu \\
&\le C\int_{B(x_0,r)}(u(x,t')-k)_+^2\, d\mu+\frac{C}{(\delta r)^p}\int_{B(x_0,r)\times(t^*-\theta_- r^p,t^*)}(u-k)_+^p\, d\nu\\
&\le C\left(\frac{\essosc_{F}{u}}{2^s}\right)^2\mu(\{x\in B(x_0,r):u(x,t')> k\}) \\
&\qquad+\frac{C}{ (\delta r)^p}\left(\frac{\essosc_{F}{u}}{2^s}\right)^p\nu(B(x_0,r)\times(t^*-\theta_- r^p,t^*))
\end{align*}
for almost every $t\in(t',t^*)$ and any $0<\delta<1$. With~\eqref{second_alternative} and the definition of $\theta_-$, this gives
\begin{align*}
&\mu(\{x\in B(x_0,(1-\delta) r):u(x,t)>l\})\\ 
&\le\left(\frac{\essosc_{F}{u}}{(l-k)_+2^s}\right)^2\frac{1-\alpha_0}{1-\displaystyle{\frac{\alpha_0}2}}\mu(B(x_0,r))\\
&\quad+\frac{C}{ (\delta r )^p}\left(\frac{\essosc_{F}{u}}{(l-k)_+2^s}\right)^2 \left(\frac{1}{2^s}\right)^{p-2} r^p \mu(B(x_0,r)) \\
&\le\left(\displaystyle{1-\frac{1}{2^{s}}}\right)^2 \left( \frac{1-\alpha_0}{1-\displaystyle{\frac{\alpha_0}2}}+\frac{C}{ \delta^p}\left(\frac{1}{2^s}\right)^{p-2}\right)\mu(B(x_0,r)),
%&\le\frac{1-\alpha_0}{1-\displaystyle{\frac{\alpha_0}2}}
%\frac{\mu(B(r)\times\{t_0\})}{\left(1-\displaystyle{\frac{1}{2^s}}\right)^{p}}
%+\frac{C}{ r^p\delta^p}
%\frac{1}{\left(1-\displaystyle{\frac{1}{2^s}}\right)^p}\frac{2^{2s}}{2^{ps}}\nu(Q_{t^*}(r^p,r))
\end{align*}
for almost every $t\in(t',t^*)$ and any $0<\delta<1$. By the $\alpha$-annular decay property~\eqref{annular_decay}, we have
\begin{align*}
&\mu(\{x\in B(x_0,r):u(x,t)>l\}) \\
&\le\mu(B(x_0,r)\setminus B(x_0,(1-\delta)r))+\mu(\{x \in B(x_0,(1-\delta) r):u(x,t)>l\})\\
&\le C\delta^\alpha\mu(B(x_0,r))+\mu(\{x \in B(x_0,(1-\delta) r):u(x,t)>l\}).
\end{align*}

Hence, by first choosing $\delta$ small enough and after this choosing $s$ large enough, we obtain that for almost every $t\in(t^*-\frac{\alpha_0}{2} \theta_-r^p,t^*)$
\begin{align}\label{lessthanone}
\mu(\{x\in &B(x_0,r):u(x,t)>l\})  \leq \frac{1-\displaystyle{\frac{3\alpha_0}4}}{1-\displaystyle{\frac{\alpha_0}2}}\mu(B(x_0,r)).
\end{align}
The choices of $\delta$ and $s$ depend only on the data.
%Recall that 
%\[
%0<-t_0\le t^*-\frac{\alpha_0}{2}r^p.
%\] 
Finally, since the same choice of $s$ is valid for every $t^*\in(t_0-\theta_+r^p,t_0)$, we conclude that \eqref{lessthanone} holds for almost every $t\in(t_0-\theta_+r^p,t_0)$.
\end{proof}
\end{lemma}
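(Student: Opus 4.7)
My plan is as follows. The first step is to translate the second alternative \eqref{assumption2} into a one-slice statement by a pigeonhole argument in time. Specifically, for each fixed $t^* \in (t_0 - \theta_+ r^p, t_0)$ I would argue by contradiction: if at every $t'$ in the sub-interval $(t^* - \theta_- r^p,\, t^* - \tfrac{\alpha_0}{2}\theta_- r^p)$ the measure $\mu(\{u(\cdot,t') > k_0^-\})$ exceeded $\frac{1-\alpha_0}{1-\alpha_0/2}\mu(B(x_0,r))$, then integrating in time over this interval of length $(1-\alpha_0/2)\theta_- r^p$ would force the spacetime measure of $\{u > k_0^-\}$ strictly above $(1-\alpha_0)$ times the full cylinder, contradicting \eqref{assumption2}. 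This yields a ``good time'' $t'$ sitting relatively close below $t^*$ at which the super-level set of $u$ at height $k_0^-$ occupies less than a fixed fraction of $B(x_0,r)$.

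With $t'$ in hand, the second step is to apply the energy inequality \eqref{DGC limit} to $(u-k)_+$ at the level $k = \esssup_F u - \essosc_F u / 2^s$, using the radii $r_1 = (1-\delta) r$ and $r_2 = r$. The initial data term $\int_{B(x_0,r)} (u(\cdot,t')-k)_+^2\, d\mu$ is controlled by $(\essosc_F u/2^s)^2$ times the good-time measure bound, since for $s \ge 2$ the set $\{u(\cdot,t') > k\}$ lies inside $\{u(\cdot,t') > k_0^-\}$. The volumetric term uses the pointwise upper bound $(u-k)_+ \le \essosc_F u / 2^s$; combined with $\theta_- = (\essosc_F u/2)^{2-p}$, the product $(\essosc_F u/2^s)^p \theta_-$ collapses to $(\essosc_F u/2^s)^2 \cdot 2^{-(s-1)(p-2)}$, which is as small as we wish for large $s$ since $p > 2$.

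Third, Chebyshev applied with the gap $l - k = (\essosc_F u/2^s)(1 - 2^{-s})$ converts the supremum-in-time $L^2$ bound into a pointwise-in-time measure estimate for $\{u(\cdot,t) > l\}$ inside $B(x_0,(1-\delta)r)$, picking up the harmless factor $(1-2^{-s})^{-2} \to 1$. The final ingredient is the $\alpha$-annular decay property \eqref{annular_decay}, which lets us extend the estimate from $B(x_0,(1-\delta)r)$ to the full ball $B(x_0,r)$ at the cost of an additive error $C\delta^\alpha \mu(B(x_0,r))$. Collecting everything, the target inequality reduces to
\[
(1-2^{-s})^{-2}\Bigl(\frac{1-\alpha_0}{1-\alpha_0/2} + \frac{C}{\delta^p\, 2^{(s-1)(p-2)}}\Bigr) + C\delta^\alpha \le \frac{1-3\alpha_0/4}{1-\alpha_0/2}.
\]

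The main obstacle is the tight ordering of the constant choices: the margin available on the right-hand side is only $(\alpha_0/4)/(1-\alpha_0/2)$, so both error terms must be absorbed into it. The correct order is to first choose $\delta$ small (using $\alpha > 0$) to dispose of the annular shell $C\delta^\alpha$, and only then choose $s$ large (using $p > 2$) to dispose of the $\delta^{-p}$-weighted $L^p$ remainder and to push $(1-2^{-s})^{-2}$ sufficiently close to $1$. Because $\delta$ and $s$ end up depending only on the data, the threshold $l$ is independent of $t^*$, so the per-slice conclusion valid for a.e.\ $t \in (t'\!,t^*)$ transfers to a.e.\ $t \in (t_0 - \theta_+ r^p,\, t_0)$ by letting $t^*$ sweep through the full interval.
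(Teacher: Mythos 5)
Your proposal is correct and follows essentially the same route as the paper's proof: the same pigeonhole-in-time selection of a good slice $t'$ from the second alternative, the same application of \eqref{DGC limit} at the level $k=\esssup_F u-\essosc_F u/2^s$ with radii $(1-\delta)r$ and $r$, Chebyshev at level $l$, the annular decay property to recover the full ball, the same order of choosing first $\delta$ then $s$, and the same sweep over $t^*$. (Your factor $(1-2^{-s})^{-2}$ is in fact the intended one; the paper's displayed exponent $+2$ there is a typo.)
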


Now we are ready to prove the final lemma which together with Lemma~\ref{geometric_convergence} gives the reduction of oscillation in case of the second alternative. For a constant $k$ define
\[
E_{k}^\tau=\left\{(x,t)\in B(x_0,\tau r)\times (t_0- \theta_+r^p,t_0)
: u(x,t)> k\right\},
\]
where $1\leq \tau \leq 2$. For $\tau=1$ we denote $E_{k}^1=E_{k}$.

\begin{lemma}\label{choosing_lambda}

For every $0<\alpha<1$ there exists a positive constant $\lambda$, which depends only on the data and on $\alpha$, such that
\[
\frac{\nu(E_{k_0^+})}{\nu(B(x_0,r)\times(t_0-\theta_+ r^p,t_0))}\le \alpha,
\]
where as in~\eqref{k_n^+},
\begin{equation*}
 k_0^+=\esssup_{F}{u}-\frac{\essosc_{F}{u}}{2^{\lambda}}.
\end{equation*}

\begin{proof}
Fix $0<\alpha<1$. Define
\[
E_h^\tau(t)=\left\{x\in B(x_0,\tau r)
: u(x,t)> h\right\},
\]
and for constants $h,k$ such that $h>k>k_0^+$, let
\[
v=
\begin{cases}
h-k,\quad &u\ge h, \\
u-k,\quad &k<u<h, \\
0, \quad &u\le k.
\end{cases}
\]
By the previous lemma we can choose $\lambda$ big enough so that for almost every $t\in(t_0-\theta_+r^p,t_0)$ we have
\begin{align*}
\mu(\{x\in B(x_0,r): v(x,t)=0\})&=\mu(\{x\in B(x_0,r): u(x,t)\le k\}) \\
&\ge\mu(\{x\in B(x_0,r): u(x,t)<k_0^+\}) \\
%&\ge\frac{\alpha_0}{4-2\alpha_0}\mu(B(r)\times\{t\}) \\
&\ge\frac{\alpha_0}{4}\mu(B(x_0,r)).
\end{align*}
Thus for almost every $t\in(t_0-\theta_+r^p,t_0)$
\[
v_{B(x_0,r)}(t)=\dashint_{B(x_0,r)} v(x,t) \, d\mu \le \l(1-\frac{\alpha_0}{4}\r)(h-k)
\]
and consequently
\[
h-k-v_{B(x_0,r)}(t) \ge \frac{\alpha_0}{4}(h-k).
\]

Using the weak $(q,q)$-Poincar\'e inequality for some $q<p$, see Remark \ref{poincare_remark},  gives

\begin{align*}
(h-k)^q\mu(E_h(t))&\le\l(\frac{4}{\alpha_0}\r)^q\int_{B(x_0,r)}|v(x,t)-v_{B(x_0,r)}(t)|^q\, d\mu \\
&\le
%\frac{\alpha_0}{4}P_0
Cr^q\int_{B(x_0,\tau r)\times\{t\}} g_v^q \, d\mu = Cr^q\int_{E_k^\tau(t)\setminus E_h^\tau(t)} g_u^q \, d\mu
\end{align*}
for almost every $t\in(t_0-\theta_+r^p,t_0)$. Next we integrate the above inequality over time to get
\[
(h-k)^q\nu(E_h)\le C r^q\int_{E_k^\tau\setminus E_h^\tau} g_u^q \, d\nu.
\]
%Next we introduce a cut-off function $\varphi$ such that
%$0\le\varphi\le 1$, $\varphi=1$ in $Q\left(\frac{\theta_+}{2} r^p,r\right)$,
%$\varphi=0$ in $Q(\theta_+ r^p,2r)\setminus Q\left(\frac{\theta_+}{2} r^p,r\right)$
%and 
%\[
%g_\varphi\le\frac{C}{r} \quad\text{and}\quad 0\le\left|\frac{\partial \varphi}{\partial t}\right|\le \frac{C}{\theta_+ r^p}.
%\]
Now H\"older's inequality gives
\begin{align*}
(h-k)^q\nu(E_h)&\le Cr^q\l(\int_{E_k^\tau\setminus E_h^\tau} g_u^p \, d\nu\r)^{q/p}\nu(E_k^\tau\setminus E_h^\tau)^{1-q/p} \\
&\le Cr^q\l(\int_{B(x_0,\tau r)\times\l(t_0-\theta_+r^p,t_0\r)} g_{(u-k)_+}^p \, d\nu\r)^{q/p}\nu(E_k^\tau\setminus E_h^\tau)^{1-q/p}.
\end{align*}
Choose
\[
k(s)=\esssup_{F}{u}-\frac{\essosc_{F}{u}}{2^{s}}.
\]
By \eqref{DGC limit} and since $\mu(B(x_0,2r))\theta_+r^p=\nu\left(B(x_0,2r)\times\left(t_0-\theta_+ r^p ,t_0\right)\right)$, we obtain for every $s\leq \lambda$
\begin{equation}\label{nabla_u}
\begin{split}
&\int_{B(x_0,\tau r)\times\l(t_0-\theta_+r^p ,t_0\r)} g_{(u-k)_+}^p \, d\nu\le \int_{B(x_0,2 \tau r)}(u(x,t_0-\theta_+  r^p)-k)_+^2\, d\mu\\
&\qquad+\frac{C}{(\tau r)^p}\int_{B(x_0,2  \tau r)\times(t_0-\theta_+  r^p,t_0)} (u-k)_+^p\, d\nu \\
%\le \frac{C}{ r^p}\left(\frac{\esssup_{Q(r^2,r)}{u}}{2^s}\right)^{p-2}\int_{Q(\theta_+  r^p,2r)}(u-k)_+^2 \, d\nu \\
&\quad\le \frac{C}{ r^p}\left(\frac{\essosc_{F}{u}}{2^s}\right)^p\nu\left(B(x_0,r)\times\left(t_0-\theta_+ r^p ,t_0\right)\right).
\end{split}
\end{equation}
In the last step we also used the doubling property of $\mu$. Choosing now
\[
h(s)=\esssup_{F}{u}-\frac{\essosc_{F}{u}}{2^{s+1}},
\]
yields
\[
%\frac{osc_{Q(\theta_+ r^p,r)}{u}}{2^{s+1}}
\nu(E_{h(s)})\le C 
%\frac{\essosc_{Q(r^2,r)}{u}}{2^s}
\nu\left(B(x_0,r)\times\left(t_0- \theta_+ r^p ,t_0\right)\right)^{q/p}\nu(E_{k(s)}^\tau\setminus E_{h(s)}^\tau)^{1-q/p}.
\]
Finally, summing this over $s=1,\dots,\lambda-1$ and then using the doubling condition to replace $\tau r$ by $r$ gives
\begin{align*}
(\lambda-1)\nu(E_{k_0^+})^{p/(p-q)}
\le C\nu\left(B(x_0,r)\times\left(t_0- \theta_+r^p ,t_0\right)\right)^{q/(p-q)}\\
\cdot\nu\left(B(x_0,r)\times\left(t_0- \theta_+ r^p ,t_0\right)\right)
\end{align*}
and hence
\[
\nu(E_{k_0^+})\le \frac{C}{(\lambda-1)^{(p-q)/p}}\nu\left(B(x_0,r)\times\left(t_0- \theta_+ r^p ,t_0\right)\right).
\]
Choosing $\lambda$ large enough finishes the proof.
\end{proof}
\end{lemma}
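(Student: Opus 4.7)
\textbf{Proof plan for Lemma \ref{choosing_lambda}.}

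My plan is to use a standard De Giorgi level-set argument, where the key idea is that the set on which $u$ lies between two high thresholds $k<h$ cannot be too small, because $u$ must make up the ``room'' $h-k$ on a set of positive measure. I fix the $\alpha$-cylinder as in the statement and introduce the truncation
\[
v=\min\{(u-k)_+,\,h-k\},
\]
so that $v=0$ on $\{u\le k\}$ and $v=h-k$ on $\{u\ge h\}$. Lemma~\ref{logarithmic_bound}, applied for some integer $s$ chosen from the data, tells me that for a.e.\ time $t\in(t_0-\theta_+ r^p,t_0)$ the set $\{u(\cdot,t)\le l\}$ has measure at least $\tfrac{\alpha_0}{4}\mu(B(x_0,r))$ once $\lambda$ is at least $2s$. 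Hence for $k\ge l$ the slice average satisfies $v_{B(x_0,r)}(t)\le(1-\alpha_0/4)(h-k)$, and in particular $h-k-v_{B(x_0,r)}(t)\ge\tfrac{\alpha_0}{4}(h-k)$ uniformly in $t$.

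Next, by Remark~\ref{poincare_remark}, the space supports a weak $(q,q)$-Poincar\'e inequality for some $q<p$. Applying it to $v$ on the ball $B(x_0,r)$ and noting that $g_v$ vanishes outside $E_k^\tau(t)\setminus E_h^\tau(t)$ and equals $g_u$ there, I get
\[
(h-k)^q\mu(E_h(t))\le C r^q\int_{E_k^\tau(t)\setminus E_h^\tau(t)} g_u^q\, d\mu
\]
for a.e.\ $t$. Integrating in time and applying H\"older's inequality with exponents $p/q$ and $p/(p-q)$ produces
\[
(h-k)^q\nu(E_h)\le C r^q\Bigl(\int_{B(x_0,\tau r)\times T} g_{(u-k)_+}^p\, d\nu\Bigr)^{q/p}\nu(E_k^\tau\setminus E_h^\tau)^{1-q/p},
\]
where $T=(t_0-\theta_+ r^p,t_0)$.

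The right-hand integral is controlled by the parabolic De Giorgi inequality~\eqref{DGC limit} applied with radii $\tau r$ and $2\tau r$, together with the intrinsic scaling $\theta_+ r^p\sim(\essosc_F u/2^\lambda)^{2-p}r^p$, the trivial bound $(u-k)_+\le\essosc_F u/2^s$ for $s\le\lambda$, and the doubling property. This yields
\[
\int_{B(x_0,\tau r)\times T} g_{(u-k)_+}^p\, d\nu\le \frac{C}{r^p}\Bigl(\frac{\essosc_F u}{2^s}\Bigr)^p\nu\bigl(B(x_0,r)\times T\bigr).
\]
Choosing $k=k(s)=\esssup_F u-\essosc_F u/2^s$ and $h=h(s)=\esssup_F u-\essosc_F u/2^{s+1}$, the prefactors $(h-k)^{-q}$ and $(\essosc_F u/2^s)^q$ cancel, and after rearranging I obtain
\[
\nu(E_{h(s)})^{p/(p-q)}\le C\,\nu(B(x_0,r)\times T)^{q/(p-q)}\,\nu(E_{k(s)}^\tau\setminus E_{h(s)}^\tau).
\]

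The final step is to sum this inequality over $s=1,\dots,\lambda-1$. The left-hand side is monotone in $s$, so it is bounded below by $(\lambda-1)\nu(E_{k_0^+})^{p/(p-q)}$, while the right-hand side telescopes (the sets $E_{k(s)}^\tau\setminus E_{h(s)}^\tau$ are disjoint in $s$ because $h(s)=k(s+1)$) and is dominated by a single $\nu$-measure of the enlarged cylinder, which is comparable to $\nu(B(x_0,r)\times T)$ by doubling. This gives
\[
\nu(E_{k_0^+})\le \frac{C}{(\lambda-1)^{(p-q)/p}}\,\nu\bigl(B(x_0,r)\times T\bigr),
\]
and choosing $\lambda$ large enough (depending only on the data and $\alpha$) finishes the proof. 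The main delicate point is the bookkeeping between the intrinsic scaling factor $\theta_+$, the exponents $\gamma_+=2^\lambda$ and the De Giorgi energy bound: one needs the intrinsic scaling to kill the $(\essosc_F u)^p$ factor on the right of~\eqref{DGC limit} so that the resulting estimate is dimensionally correct and yields a genuine power of $(\lambda-1)^{-1}$ in the end.
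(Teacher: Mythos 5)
Your proposal is correct and follows essentially the same route as the paper: the truncation $v$, the measure-positivity of $\{v=0\}$ from Lemma \ref{logarithmic_bound}, the weak $(q,q)$-Poincar\'e inequality, H\"older's inequality, the energy bound from \eqref{DGC limit} with the intrinsic scaling $\theta_+$, and the summation over the levels $k(s),h(s)$ using their disjointness. The bookkeeping you flag (cancelling the $(\essosc_F u)^p$ factor via $\theta_+$ and $s\le\lambda$) is exactly how the paper closes the estimate.
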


Now we are in the position to prove the reduction of oscillation in the case of the second alternative, and then complete the proof of the Hölder continuity of $u$.  

\begin{lemma}
Let $u$ belong to the parabolic De Giorgi class and let the second alternative be in force. Then
\[
u\ge \esssup_F u+\frac{\essosc_F u}{2^{\lambda+1}}\qquad\text{a.e. in}\quad B(x_0,r/2)\times\left(t_0-\theta_+\left(r/2\right)^p,t_0\right),
\]
where $\lambda$ depends only on the data.
\begin{proof}
Set
\begin{align*}
 r_n&=\frac{r}{2}+\frac{r}{2^{n+1}}, \qquad r_0=r,\\
Q_n^+&=B_n\times T_n= B(x_0,r_n)\times (t_0-\theta_+ r_n^p, t_0),
\end{align*}
and 
\begin{align*}
 k_n^+&=\esssup_F u-\frac{\essosc_F}{2^{\lambda+1}}-\frac{\essosc_F}{2^{\lambda+n+1}},\\
A_n^+ &=\left\{\,(x,t)\in Q_n^+:\, u(x,t)>k_n^+\right\}.
\end{align*}
Hence the corresponding condition 
Then for every $n\geq 0$, we have
\begin{align*}
(u-k_n^-)_-\leq \eps \essosc_F u,\quad |k_{n+1}^\pm-k_n^\pm|\ge\varepsilon \frac{\essosc_F{u}}{2^{n+2}},\quad\textrm{ where } \quad \varepsilon = \frac{1}{2^\lambda}.
\end{align*}
Hence, the initial condition \eqref{strt-engine} corresponding to Lemma \ref{main_lemma} takes on the form
\begin{align*}
\frac{\nu(A_0^+)}{\nu(Q_0^+)}\leq (C_0(3+1^{p-2}+1^{2-p})^{2-p/\kappa})^{-1/(1-p/\kappa)}(4^{p2(2-p/\kappa)})^{1-(1-p/\kappa)^2}.
\end{align*}
Since the right hand side depends only on the data, by Lemma \ref{choosing_lambda} we see that there exists a $\lambda$, which depends only on the data, for which the above condition is satisfied. Assume $\lambda$ is such. Then as $n \rightarrow 0$ we have $\nu(A_{2n}^+)/\nu(Q_{2n}^+)\rightarrow 0$, which implies the statement of the lemma.
\end{proof}
\end{lemma}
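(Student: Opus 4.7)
The plan is to iterate the parabolic De Giorgi energy estimate (Lemma \ref{main_lemma}) on a nested family of cylinders shrinking to $B(x_0, r/2) \times (t_0 - \theta_+ (r/2)^p, t_0)$, with level sets that increase toward $\esssup_F u - \essosc_F u / 2^{\lambda + 1}$. (I read the statement as giving an upper bound $u \le \esssup_F u - \essosc_F u/2^{\lambda+1}$; this is what the choice of $k_n^+$ below will actually produce.)

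First I would take the standard De Giorgi sequences
\begin{align*}
r_n = \tfrac{r}{2} + \tfrac{r}{2^{n+1}}, \qquad Q_n^+ = B(x_0, r_n) \times (t_0 - \theta_+ r_n^p, t_0),
\end{align*}
\begin{align*}
k_n^+ = \esssup_F u - \tfrac{\essosc_F u}{2^{\lambda+1}} - \tfrac{\essosc_F u}{2^{\lambda+n+1}}, \qquad A_n^+ = \{(x,t)\in Q_n^+ : u > k_n^+\},
\end{align*}
and check the hypotheses of Lemma \ref{main_lemma}: one has $(u - k_n^+)_+ \le \essosc_F u / 2^\lambda$ and $|k_{n+1}^+ - k_n^+| = \essosc_F u / 2^{\lambda+n+2}$, so \eqref{eps_bounds} holds with $\varepsilon = 2^{-\lambda}$. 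The decisive feature is that $\varepsilon \gamma_+ = 2^{-\lambda}\cdot 2^\lambda = 1$, so the $\lambda$-dependent factor $(3 + (\varepsilon\gamma_+)^{p-2} + (\varepsilon\gamma_+)^{2-p})^{2-p/\kappa}$ collapses to a constant that depends only on the data. Lemma \ref{main_lemma} then gives a recursion of the form
\[
\frac{\nu(A_{n+2}^+)}{\nu(Q_{n+2}^+)} \le C\, 4^{pn(2-p/\kappa)} \left(\frac{\nu(A_n^+)}{\nu(Q_n^+)}\right)^{2-p/\kappa}
\]
with $C$ depending only on the data. Apply Lemma \ref{geometric_convergence} along the even subsequence: the launch threshold for $\nu(A_0^+)/\nu(Q_0^+)$ likewise depends only on the data.

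With this data-only threshold in hand, I would invoke Lemma \ref{choosing_lambda}, which for any prescribed $\alpha \in (0,1)$ produces a $\lambda$ (depending only on the data and $\alpha$) such that $\nu(E_{k_0^+})/\nu(Q_0^+) \le \alpha$, where by construction $k_0^+ = \esssup_F u - \essosc_F u / 2^\lambda$ matches the first level here and $A_0^+ = E_{k_0^+}$. Choosing $\alpha$ to equal the launch threshold fixes a data-dependent $\lambda$ for which $\nu(A_{2n}^+)/\nu(Q_{2n}^+) \to 0$. Since $k_n^+ \nearrow \esssup_F u - \essosc_F u / 2^{\lambda+1}$ and $Q_n^+ \supset B(x_0, r/2)\times (t_0 - \theta_+ (r/2)^p, t_0)$ for all $n$, this yields the claimed bound almost everywhere in the limit cylinder.

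The main obstacle is the apparent circularity in the choice of $\lambda$: the constants delivered by Lemma \ref{main_lemma} carry the factor $(\varepsilon \gamma_+)^{p-2} + (\varepsilon \gamma_+)^{2-p}$, which in principle blows up as $\lambda$ grows, so naively the required smallness of $\nu(A_0^+)/\nu(Q_0^+)$ would itself depend on $\lambda$. The design choice $\varepsilon = 1/\gamma_+$ is precisely what neutralizes this dependence and unlocks the argument; verifying this cancellation, and then threading it cleanly through Lemma \ref{choosing_lambda} to fix a single $\lambda$ depending only on the data, is the delicate point of the proof.
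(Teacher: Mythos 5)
Your proposal is correct and follows the paper's own argument essentially verbatim: the same sequences $r_n$, $Q_n^+$, $k_n^+$, the choice $\varepsilon = 2^{-\lambda}$ so that $\varepsilon\gamma_+ = 1$ collapses the constant in Lemma \ref{main_lemma} (and hence the launch threshold from Lemma \ref{geometric_convergence}) to a data-only quantity, after which Lemma \ref{choosing_lambda} fixes a single $\lambda$ depending only on the data. Your reading of the conclusion as $u \le \esssup_F u - \essosc_F u/2^{\lambda+1}$ is indeed the intended (correctly signed) statement, as Corollary \ref{reduction of oscillation} confirms.
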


\begin{corollary}\label{reduction of oscillation}
Suppose that the second alternative holds. Then there exists a $\sigma_1\in (0,1)$, which depends only on the data, such that
\[
\essosc_{B(x_0,\frac{r}{2})\times(t_0-\theta_+\left(\frac{r}{2}\right)^p,t_0)}{u}\le \sigma_1 \essosc_{F}{u}.
\]

\begin{proof}
By the previous lemma, we have
\[
\esssup_{B(x_0,\frac{r}{2})\times(t_0-\theta_+\left(\frac{r}{2}\right)^p,t_0)}{u}
\le \esssup_{F}{u}-\frac{\essosc_{F}{u}}{2^{\lambda+1}}
\]
for some $\lambda>1$ which depends only upon the data. 
This implies the statement of the lemma with
\[
\sigma_1=1-\frac{1}{2^{\lambda+1}}.
\]
\end{proof}

\end{corollary}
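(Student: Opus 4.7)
The plan is to deduce the oscillation reduction directly from the previous lemma, which controls $u$ from above on the sub-cylinder $B(x_0,r/2)\times(t_0-\theta_+(r/2)^p,t_0)$. Since the second alternative in this scheme corresponds to the ``hot'' side, the lemma provides the upper estimate
\[
\esssup_{B(x_0,r/2)\times(t_0-\theta_+(r/2)^p,t_0)} u \le \esssup_F u - \frac{\essosc_F u}{2^{\lambda+1}},
\]
with $\lambda$ depending only on the data. This is the only nontrivial input; everything else will be bookkeeping.

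Next I would exploit the monotonicity of the essential infimum under inclusion. Since the sub-cylinder $B(x_0,r/2)\times(t_0-\theta_+(r/2)^p,t_0)$ is contained in $F$, one has trivially
\[
\essinf_{B(x_0,r/2)\times(t_0-\theta_+(r/2)^p,t_0)} u \ge \essinf_F u.
\]
Subtracting this from the upper bound above yields
\[
\essosc_{B(x_0,r/2)\times(t_0-\theta_+(r/2)^p,t_0)} u \le \esssup_F u - \essinf_F u - \frac{\essosc_F u}{2^{\lambda+1}} = \Bigl(1 - \frac{1}{2^{\lambda+1}}\Bigr)\essosc_F u.
\]

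The conclusion then follows upon setting
\[
\sigma_1 = 1 - \frac{1}{2^{\lambda+1}} \in (0,1),
\]
which depends only on the data through $\lambda$. I do not anticipate any real obstacle here: the previous lemma has already absorbed the analytic work (the energy estimates, the choice of $\lambda$ via Lemma~\ref{choosing_lambda}, and the De Giorgi iteration driven by Lemma~\ref{main_lemma} and Lemma~\ref{geometric_convergence}), so the corollary reduces to this two-line algebraic combination.
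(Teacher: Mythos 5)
Your proposal is correct and follows essentially the same route as the paper: both deduce the oscillation reduction from the upper bound on $\esssup u$ given by the previous lemma (whose statement contains an obvious sign typo, which you correctly read as $u\le \esssup_F u-\essosc_F u/2^{\lambda+1}$), combined with the trivial inclusion bound $\essinf \ge \essinf_F u$, yielding $\sigma_1=1-2^{-(\lambda+1)}$.
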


\noindent \textbf{Proof of Theorem \ref{Holder continuity}}. Since either the first alternative or the second alternative is in force, by Corollary \ref{reduction of oscillation first alternative}  and Corollary \ref{reduction of oscillation} we know that 
\begin{align*}
\essosc_{B(x_0,\frac{r}{8})\times(t_0-\theta_-\left(\frac{r}{8}\right)^p,t_0)}{u}\le \sigma \essosc_{F}{u},
\end{align*}
where $\sigma=\max\{\sigma_0,\sigma_1\}<1$ and $\theta_-$ depend only on the data. The local Hölder continuity of $u$ now follows from this reduction of oscillation, by a standard recursive argument presented for example in \cite{Urba08}, p.44.

\section{Regularity of parabolic quasiminimizers} 

In this section we show that parabolic quasiminimizers belong to the parabolic De Giorgi class. By the previous sections, bounded parabolic quasiminimizers are thus locally Hölder continuous. 

\subsection{Parabolic quasiminimizers}
\begin{definition}
  A function $u \in L_{\textrm{loc}}^p(0,T;N_{\textrm{loc}}^{1,p}(\Omega))$ is a parabolic $K$-quasi-minimizer if there exists a constant $K\ge 1$ and a Carath\'eodory function $G(x,t,u,g)$, satisfying the growth condition
\begin{equation}\label{structure_assumptions}
 C_1|q|^p\le G(x,t,v,q)\le C_2|q|^p
\end{equation}
for some positive constants $C_1$ and $C_2$, such that
\begin{equation} \label{quasi-minimizer}
\begin{split}
-\int\limits_{F} u\frac{\partial\phi}{\partial t}\, d\nu +E(u;F)\le K E(u-\phi;F)
\end{split}
\end{equation}
for every open $F\subset \subset \Omega_T$ and $\phi \in C^{\infty}(0,T;N^{1,p}(\Omega))$ such that $\{\phi\neq 0\}\subset F$. 
Here
\[
 E(u;F)=\int\limits_{F} G(x,t,u,g_u) \, d\nu.
\]
\end{definition}
By \eqref{structure_assumptions} and \eqref{quasi-minimizer} we have that if $u$ is a $K$-quasiminimizer, then there exists positive constants $0<C_1<C_2$ such that
\begin{align}\label{q-mimimum after estimates}
-\int_{F}u\frac{\partial \phi}{\partial t} \, d\nu + &C_1 \int_{F} g_{u}^p \, d\nu\le K C_2 \int_{F} g_{u-\phi}^p \, d\nu,
\end{align}
for every open $F\subset \subset \Omega_T$ and $\phi \in C^{\infty}(0,T;N^{1,p}(\Omega))$ such that $\{\phi\neq 0\}\subset F$. This implies the following.
\begin{lemma}\label{equivalent q-minimum} Let $u \in L_{\textrm{loc}}^p(0,T;N_{\textrm{loc}}^{1,p}(\Omega))$ be a parabolic $K$-quasiminimizer. Then there exist $0<C_1<C_2$ such that
\begin{align*}
-\int_{\{\phi \neq 0\}}u\frac{\partial \phi}{\partial t} \, d\nu + &C_1 \int_{\{\phi \neq 0 \}} g_{u}^p \, d\nu\le K C_2 \int_{\{\phi \neq 0 \}} g_{u-\phi}^p \, d\nu
\end{align*}
for every $\phi \in C^{\infty}(0,T;N^{1,p}(\Omega))$ such that supp$\,\phi\subset \subset \Omega$.

\begin{proof}
Let $\varepsilon>0$ and $\phi \in C^{\infty}(0,T;N^{1,p}(\Omega))$ be such that supp$\,\phi\subset \subset \Omega$. Since $\{\phi \neq 0\}$  is $\nu$-measurable and compactly contained in $\Omega_T$, and since $g_u\in L^p(\Omega_T)$, there exists an open set $F\subset\subset  \Omega_T$ such that 
\begin{align*}
\int_{F\setminus \{\phi\neq 0\}} g_{u}^p\,d\nu \leq \frac{\varepsilon}{KC_2}.
\end{align*}
Also, since $\phi$ is continuous with respect to time we have $\nu(\{\phi=0,\, \partial \phi/\partial t\neq 0)\})=0$, and so by \eqref{q-mimimum after estimates} we can write
\begin{align*}
&-\int_{\{\phi\neq 0\}}u\frac{\partial \phi}{\partial t} \, d\nu + C_1 \int_{\{\phi\neq 0\}} g_{u}^p \, d\nu\leq -\int_{F}u\frac{\partial \phi}{\partial t} \, d\nu + C_1 \int_{F} g_{u}^p \, d\nu\\& K C_2 \int_{F} g_{u-\phi}^p \, d\nu\leq K C_2 \int_{\{\phi\neq 0\}} g_{u-\phi}^p \, d\nu+\varepsilon.
\end{align*}
This holds for every $\varepsilon>0$, which completes the proof.
\end{proof}
\end{lemma}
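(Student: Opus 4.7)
The plan is to derive the tighter inequality by applying the quasiminimizer definition in an open neighborhood $F$ of $\{\phi\neq 0\}$ that is only slightly larger, and then shrinking $F$ down to $\{\phi\neq 0\}$ using absolute continuity of $\int g_u^p\,d\nu$. The three integral terms appearing in \eqref{q-mimimum after estimates} will be tracked separately.

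First I would fix $\phi\in C^\infty(0,T;N^{1,p}(\Omega))$ with $\spt\phi\subset\subset\Omega$ and pick $\eps>0$. Since $g_u\in L^p_{\mathrm{loc}}(\Omega_T)$ and the Borel set $\{\phi\neq 0\}$ is compactly contained in $\Omega_T$, absolute continuity of the integral furnishes an open set $F$ with $\{\phi\neq 0\}\subset F\subset\subset\Omega_T$ and
\begin{equation*}
\int_{F\setminus\{\phi\neq 0\}} g_u^p\,d\nu < \frac{\eps}{KC_2}.
\end{equation*}
I would then apply inequality \eqref{q-mimimum after estimates} to this $F$ and rewrite each of the three integrals.

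For the time-derivative term I need $\partial\phi/\partial t=0$ $\nu$-a.e.\ on $\{\phi=0\}$. This comes from a Fubini argument: for each fixed $x$, the smooth map $t\mapsto\phi(x,t)$ can vanish with nonzero derivative only at isolated points, hence on a one-dimensional set of measure zero, so $\nu(\{\phi=0,\,\partial_t\phi\neq 0\})=0$. For the gradient of $u-\phi$ on the right I would invoke the locality property of the minimal $p$-weak upper gradient recalled in Section~\ref{section:preliminaries}: since $u-\phi=u$ on $\{\phi=0\}$, one has $g_{u-\phi}=g_u$ $\mu$-a.e.\ there, and therefore
\begin{equation*}
\int_F g_{u-\phi}^p\,d\nu = \int_{\{\phi\neq 0\}} g_{u-\phi}^p\,d\nu + \int_{F\setminus\{\phi\neq 0\}} g_u^p\,d\nu.
\end{equation*}

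Combining these observations with the trivial $\int_{\{\phi\neq 0\}} g_u^p\,d\nu\le \int_F g_u^p\,d\nu$, the quasiminimizer inequality yields
\begin{equation*}
-\int_{\{\phi\neq 0\}} u\,\partial_t\phi\,d\nu + C_1\int_{\{\phi\neq 0\}} g_u^p\,d\nu \le KC_2\int_{\{\phi\neq 0\}} g_{u-\phi}^p\,d\nu + \eps,
\end{equation*}
and sending $\eps\to 0$ concludes. The main obstacle is really the Fubini step for the time derivative: without the $C^\infty$ dependence in time one could not rule out a $\nu$-positive set on which $\phi$ vanishes while $\partial_t\phi$ does not. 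Everything else is bookkeeping with absolute continuity and the locality of minimal upper gradients.
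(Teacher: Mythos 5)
Your proposal is correct and follows essentially the same route as the paper: choose an open $F\supset\{\phi\neq 0\}$ by absolute continuity with $\int_{F\setminus\{\phi\neq 0\}}g_u^p\,d\nu$ small, apply \eqref{q-mimimum after estimates} on $F$, use that $\partial\phi/\partial t=0$ $\nu$-a.e.\ where $\phi=0$, and let $\eps\to 0$. You merely spell out two points the paper leaves implicit, namely the isolated-zeros/Fubini justification for $\nu(\{\phi=0,\ \partial_t\phi\neq 0\})=0$ and the locality property $g_{u-\phi}=g_u$ a.e.\ on $\{\phi=0\}$ used to control the right-hand side.
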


Our aim in what follows is to prove that a parabolic quasiminimizer $u$ belongs to the parabolic De Giorgi Class, i.e. fulfills the estimate of Definition \ref{De Giorgi class}. 

A fundamental part of the proof is to use partial integration on $u$ with respect to the time variable. However, the time regularity of the function $u\in L_{\textrm{loc}}^p(0,T;N_{\textrm{loc}}^{1,p}(\Omega))$ is a priori not sufficient for this. Therefore we first establish suitable estimates for $u_\varepsilon$, which denotes the standard time mollification of $u$. Having done this we  then pass to the limit as $\varepsilon \rightarrow0$, and obtain the desired results for $u$. 

In the Euclidean setting this argument works, since the theory of mollifiers together with the linearity of taking a gradient guarantees that $u_\varepsilon \rightarrow u$ and $\nabla u_\varepsilon \rightarrow \nabla u$ in $L_{\textrm{loc}}^p(\Omega_T)$ as $\varepsilon \rightarrow 0$. Unfortunately however, the presence of upper gradients in place of usual gradients causes complication to the argument. Since the operation of taking an upper gradient is not linear, it turns out to be problematic to show that $g_{u-u_\varepsilon}\rightarrow 0$ in $L_{\textrm{loc}}^p(\Omega_T)$ as $\varepsilon\rightarrow 0$. It would be interesting to know whether or not it is possible to show this using only the theory of upper gradients. %In the Euclidean setting there are no difficulties to show that $\grad u-\grad u_\varepsilno \rightarrow 0$ in $L_{\textrm{loc}}^p(\Omega_T)$ as $\varepsilon \rightarrow 0$, and so the . 

We circumvent this question by using the known comparability between upper gradients and so called Cheeger derivatives. As will be seen in the following, the Cheeger derivative has the property of being a linear operation. 

\subsection{The Cheeger derivative}

The following theorem, which yields in a local sense the notion of partial derivatives in metric space, is by Cheeger \cite{Chee99}. For a concise source of tools given by the theory of Cheeger derivatives we refer to \cite{BjorBjorShan03} and the references therein.

\begin{theorem}
Let $X$ be a metric measure space equipped with a positive doubling Borel regular measure $\mu$. Assume $X$ admits a weak $(1,p)$-Poincaré inequality for some $1<p<\infty$.

Then there exists a countable collection $(U_\alpha, X^\alpha)$ of measurable sets $U_\alpha$ and Lipschitz functions $X^\alpha=(X_1^\alpha, \dots,X_{k(\alpha)}^\alpha):X\rightarrow \R^{k(\alpha)}$ such that $\mu \left(X\setminus \bigcup_\alpha U_\alpha \right)=0$ and for all $\alpha$, the following hold:

The functions $X_1^\alpha,\dots, X_{k(\alpha)}^\alpha$ are linearly independent on $U_\alpha$ and $1\leq k(\alpha)\leq N$, where $N$ is a constant depending only on the doubling constant of $\mu$ and the constant from the Poincaré inequality. If $f:X\rightarrow \R$ is Lipschitz, then there exist unique measurable bounded vector valued functions $d^\alpha f: U_\alpha \rightarrow \R^{k(\alpha)}$ such that for $\mu$-a.e. $x_0 \in U_\alpha$,
\begin{align*}
\lim_{r\rightarrow 0+} \sup_{x\in B(x_0,r)} \frac{\left|f(x)-f(x_0)-\left<d^\alpha f(x_0), X^\alpha(x)-X^\alpha (x_0)\right>\right|}{r}=0.
\end{align*}

\end{theorem}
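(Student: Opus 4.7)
The statement is Cheeger's differentiation theorem, so the plan is to reproduce the scheme of \cite{Chee99} adapted to the present notation. The central object is the upper pointwise Lipschitz constant
\[
\operatorname{Lip}(f)(x)=\limsup_{r\to 0^+}\sup_{y\in B(x,r)\setminus\{x\}}\frac{|f(y)-f(x)|}{d(y,x)},
\]
which, under the doubling plus weak $(1,p)$-Poincar\'e hypothesis, is an upper gradient of a Lipschitz $f$ and is $\mu$-a.e.\ comparable to the minimal $p$-weak upper gradient $g_f$. This comparability is the metric substitute for the ordinary gradient and it is what allows a Rademacher-type argument to run.

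Next I would introduce the notion of an \emph{admissible chart} on a measurable set $U$ of positive measure: a tuple $X=(X_1,\dots,X_k)$ of Lipschitz functions such that for every nonzero $\vec c\in\R^k$ one has $\operatorname{Lip}(\vec c\cdot X)(x)>0$ for $\mu$-almost every $x\in U$. This is the correct pointwise notion of ``linear independence'' in the metric setting. The main technical step is to prove a uniform upper bound $k\le N(C_\mu,P_0)$. The idea is that if one had $k$ arbitrarily large on some set of positive density, one could use the $X_j$'s to construct a Lipschitz map $X\to\R^k$ which is bi-Lipschitz on pieces of positive measure; then the $(\kappa,p)$-Poincar\'e/Sobolev inequality \eqref{Sobolev} applied on small balls, combined with the doubling bound
\[
\mu(B(z,r))\mu(B(y,R))^{-1}\ge C_\mu^{-2}(r/R)^{d_\mu},
\]
forces a contradiction with a volume/capacity estimate once $k$ exceeds a constant that depends only on the data. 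This is the hard part of the theorem.

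Once the dimension bound is in hand, a standard exhaustion/maximality argument (Zorn, or iterated selection of admissible charts maximizing $k$ on sets of positive measure) produces the countable partition $\{U_\alpha\}$ together with maximal admissible charts $X^\alpha$. For an arbitrary Lipschitz $f:X\to\R$ and $\mu$-a.e.\ $x_0\in U_\alpha$, I would then define
\[
d^\alpha f(x_0)=\operatorname*{arg\,min}_{\vec c\in\R^{k(\alpha)}}\operatorname{Lip}\bigl(f-\vec c\cdot X^\alpha\bigr)(x_0).
\]
Maximality of the chart guarantees that this minimum equals $0$ almost everywhere (otherwise the residual could be adjoined to $X^\alpha$, enlarging $k(\alpha)$ on a set of positive measure and violating maximality), and linear independence guarantees uniqueness of the minimizer. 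Measurability of $d^\alpha f$ follows from $\operatorname{Lip}(f)$ being a Borel function and a measurable-selection argument.

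Finally, the pointwise limit statement is obtained by specializing the identity $\operatorname{Lip}(f-\langle d^\alpha f(x_0),X^\alpha\rangle)(x_0)=0$ at Lebesgue density points $x_0$ of $U_\alpha$ and unwinding the definition of $\operatorname{Lip}$. The main obstacle, as indicated, is the uniform finite-dimensionality $k(\alpha)\le N$; every other step is either a routine exhaustion argument or a direct consequence of differentiation theory in doubling metric measure spaces.
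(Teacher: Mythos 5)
You should first be aware that the paper does not prove this statement at all: it is Cheeger's differentiation theorem, quoted verbatim and attributed to \cite{Chee99}, and the authors use it as a black box to define the Cheeger derivative $Du$. So any genuine proof you give is automatically ``a different route'' from the paper; the only question is whether your sketch would actually close. Your overall skeleton (pointwise Lipschitz constant, admissible charts defined by $\operatorname{Lip}(\vec c\cdot X)>0$ a.e.\ for $\vec c\neq 0$, a maximal-chart exhaustion, $d^\alpha f$ as the coefficient killing the residual Lipschitz constant, and reading off the limit at density points) is faithful to the Cheeger--Keith scheme, so the architecture is right.

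The genuine gap is exactly at the step you yourself flag as the hard part, the uniform bound $k(\alpha)\le N(C_\mu,P_0)$, and the mechanism you propose for it does not work. Chart maps $X^\alpha:X\to\R^{k(\alpha)}$ are in general \emph{not} bi-Lipschitz on any set of positive measure: in the first Heisenberg group (a PI space explicitly invoked in this paper) the canonical chart has $k=2$, while every subset of positive Haar measure has Hausdorff dimension $4$ and therefore admits no bi-Lipschitz embedding into $\R^2$; so the ``bi-Lipschitz on pieces of positive measure, then volume/capacity contradiction'' argument collapses. The actual finite-dimensionality proofs are quite different: Cheeger bounds the dimension of spaces of generalized linear (harmonic-like) functions via the Poincar\'e inequality, and Keith's later simplification bounds it through a combinatorial argument at density points using doubling together with the Lip--lip inequality. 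Relatedly, the a.e.\ comparability $\operatorname{Lip}f\simeq g_f$ that you take as your starting point is itself a substantial theorem (Cheeger's Theorem 6.1, or Keith's Lip--lip estimate proved via maximal functions and telescoping from the $(1,p)$-Poincar\'e inequality); in Cheeger's original development it is a \emph{consequence} of the differentiation machinery, so invoking it unproved risks circularity unless you follow Keith's route and prove the Lip--lip inequality directly. As written, then, the proposal is an accurate roadmap of known proofs but does not supply the two ingredients that constitute their real content; for the purposes of this paper the honest ``proof'' is the citation to \cite{Chee99}.
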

A non negative function $|\cdot|_{1,x_0}$ is introduced \cite{Chee99}, p.460, on $d_\alpha f(x_0)$ such that 
\begin{align*}
 |d^\alpha f(x_0)|_{1,x_0}=g_f(x_0),
\end{align*}
where $g_f$ is the minimal $p$-weak upper gradient of $f$.
Furthermore, it is shown that one can find an inner product norm\\ $|\cdot |_{x_0}:\R^{k(\alpha)}\rightarrow [0,\infty)$ which is $C$-quasi-isometric to $|\cdot|_{1,x_0}$, where the constant $C$ depends only on $k(\alpha)$.

We may assume that the sets $U_\alpha$ are pairwise disjoint. For each $\alpha$, extend $d^\alpha f$ to be zero in the set $X\setminus U_\alpha$, and define 
\begin{align*}
Df: X \rightarrow \R^{k(\alpha)}, \\
Df=\sum_\alpha  d^\alpha f.
\end{align*}
The above imply that the differential mapping $D:f \mapsto Df$ is linear, and that there is a constant $C>0$, which depends only on $N$, such that for all Lipschitz functions $f$ and $\mu$-a.e. $x\in X$

\begin{align}\label{comparability2}
 \frac{1}{C}|Df(x)|\leq g_f(x)\leq C|Df(x)|,
\end{align}
where by $|Df(x)|$ we mean $|d^{\alpha} f(x)|_x$, whenever $x\in U_\alpha$.

From \cite{Shan00} it is known  that the Newtonian space $N^{1,p}(X)$ is the closure in the $N^{1,p}$-norm, of the collection of Lipschitz functions on $X$ with finite $N^{1,p}$-norm. By \cite{FranHajlKosk99}  we know that there exists a unique gradient $Du$ which satisfies \eqref{comparability2} for every $u\in N^{1,p}(X)$. Also, if $\{u_j\}_{j=1}^\infty$ is a sequence in $N^{1,p}(X)$, then $u_j\rightarrow u$ in $N^{1,p}(X)$  if and only if $u_j \rightarrow u$ and $Du_j \rightarrow Du$ in $L^p(X,\mu; \R^N)$, as $j\rightarrow \infty$.

Analogously to what was done with upper gradients, we define the parabolic Cheeger derivative of a time dependent function by taking the Cheeger derivative with respect to the variable $x$, at time level $t$.  

Next we prove the steps which will be used to overcome the complications in the mollification argument, caused by the non linearity of upper gradients. Here and in what follows we denote by $u_h$ the time mollification of a function $u$, i.e.
\begin{align*}
 u_h(x,t)= \int_{-h}^h \eta_h(s) u(x,t-s)\,ds,
\end{align*}
where $\eta_h(s)=h^{-1} \eta(s/h)$ denotes a standard mollifier.

\begin{lemma}\label{convergence of upper gradient}
 Assume $u\in L_{\textrm{loc}}^p(0,T;N_{\textrm{loc}}^{1,p}(\Omega))$. Then as $h\rightarrow 0$, it holds 
$g_{u-u_h} \rightarrow 0$ in $L_{\textrm{loc}}^p(\Omega_T)$, and also pointwise $\nu$-almost everywhere in $\Omega_T$. Moreover, as $s \rightarrow 0$,  we have $g_{u(\cdot, \cdot-s)-u} \rightarrow 0$ in $L_\textrm{loc}^p(\Omega_T)$.
\begin{proof}
Let $h>0$. For $\nu$-almost every $x,y\in \Omega$, $t\in [t_1+h,t_2-h]$ and every compact rectifiable path $\gamma$ from $x$ to $y$, we have
\begin{align*}
 |u_h(x,t)-u_h(y,t)|&\leq \int_{t_1}^{t_2} \left(\int_\gamma (g_u)(z,s)\,dz \right)\eta_h(t-s)\,ds\\
&=\int_{\gamma} (g_u)_h(z,t)\,dz.
\end{align*}
Hence $(g_u)_h$ is a $p$-weak upper gradient of $u_h$. The definition of the minimal $p$-weak upper gradient now implies that for $\nu$-almost every $(x,t)\in\Omega\times [t_1+h,t_2-h]$ 
 \begin{align}\label{mollification is an upper gradient}
g_{u_h}(x,t) \leq (g_u)_h(x,t).
\end{align}

We now show that for $\nu$-every $(x,t)\in \Omega \times (h,T-h)$ we have $D u_h(x,t)=(D u)_h(x,t)$. Assume first that $u\in \textrm{Lip}(\Omega \times (0,T))$. Assume a point $(x_0,t)\in \Omega\times (0,T)$ where with respect to the spatial variable, the Cheeger derivative of $u$ exists. Let $r>0$ be such that $B(x_0,r)\subset \Omega$. Then, since $u$ and $X^\alpha$ are Lipschitz-continuous with modulus $C_{\textrm{Lip}(u)}$ and $C_{\textrm{Lip}(X^\alpha)}$ respectfully, we may write for any $x\in B(x_0,r)$
\begin{align*}
 &r^{-1}|u_h(x,t)-u_h(x_0,t)-\left< (d^\alpha u)_h(x_0,t), X^\alpha (x)-X^\alpha(x_0)\right>|\\
\leq& \int_{-h}^{h}\eta_h(t-s)r^{-1} %\sup_{x\in B(x_0,r)}
|u(x,s)-u(x_0,s)-\left< (d^\alpha u)(x_0,s), X^\alpha (x)-X^\alpha(x_0)\right>|\, ds\\
%\leq& \int_{-h}^{h}\eta_h(t-s)\frac{(C\cdot C_{\textrm{Lip}(u)}+C\cdot C_{\textrm{Lip}(u)}\cdot C_{\textrm{Lip}(X^\alpha)})d(x,x_0)}{r} \,ds\\
\leq&\, C\cdot C_{\textrm{Lip}(u)}+C\cdot C_{\textrm{Lip}(u)}\cdot C_{\textrm{Lip}(X^\alpha)}.
\end{align*}
Therefore by the Lebesgue theorem of dominated convergence we have
\begin{align*}
& \lim_{r\rightarrow 0+} \sup_{x\in B(x_0,r)} r^{-1}\left|u_h(x,t)-u_h(x_0,t)-\left<(d^\alpha u)_h(x_0,t), X^\alpha(x)-X^\alpha (x_0)\right>\right|\\
&\leq \int_{-h}^h \eta_h(t-s) \\
&\cdot\lim_{r\rightarrow 0+}\sup_{x\in B(x_0,r)} r^{-1} |u(x,s)-u(x_0,s)-\left< (d^\alpha u)(x_0,s), X^\alpha (x)-X^\alpha(x_0)\right>| \,ds\\
&=0.
\end{align*}
By the uniqueness of the Cheeger derivative, and by the definition of $Du$ the above implies that $D u_h(x_0,t)=(D u)_h(x_0,t)$. Assume then that $u\in L_{\textrm{loc}}^p(0,T;N_{\textrm{loc}}^{1,p}(\Omega))$, not necessarily Lipschitz, and let $F$ be a compact subset of $\Omega_T$. Let $\{u_j\}\subset \textrm{Lip}(\Omega_T)$ be a sequence such that $u_j \rightarrow u$ in $L^p(0,T;N^{1,p}(\Omega))$. 
Then, since $u_j$ is Lipschitz, by inequality \eqref{comparability2} and by \eqref{mollification is an upper gradient}
\begin{align*}
 &\|Du_h-(Du)_h\|_{L^p(F)}\leq \| D(u-u_j)_h\|_{L^p(F)}+\| D(u_j)_h-(Du)_h\|_{L^p(F)}\\
&\leq C\| g_{(u-u_j)_h}\|_{L^p(F)}+\|(Du_j)_h-(Du)_h\|_{L^p(F)}\\
&\leq C\|(g_{u-u_j})_h\|_{L^p(F)}+\|(Du_j-Du)_h\|_{L^p(F)}\\
&\leq C\|g_{u-u_j}\|_{L^p(F)}+C\|Du_j-Du\|_{L^p(F)}.
\end{align*}
Since the last expression tends to zero as $j\rightarrow \infty$, we can conclude that $Du_h(x,t)=(Du)_h(x,t)$ for $\nu$-almost every $(x,t)\in \Omega\times (h,T-h)$.

By inequality \eqref{comparability2}, by the linearity of the Cheeger derivation and since $Du_h=(Du)_h$, we can write for $\nu$-almost every $(x,t)\in \Omega \times (h,T-h)$ 
\begin{align}\label{dominated upper gradient}
 g_{u-u_h} \leq C|Du-(Du)_h|.
\end{align}
Since $g_u\in L_{\textrm{loc}}^p(\Omega_T)$, by \eqref{comparability2}, also $Du \in L_{\textrm{loc}}^p(\Omega_T)$. This means, by the theory of mollifiers, that as $h\rightarrow 0$, on the right side of \eqref{dominated upper gradient} we have convergence to zero in $L_{\textrm{loc}}^p(\Omega_T)$ and also pointwise $\nu$-almost everywhere in $\Omega_T$.  Lastly, for an $s>0$ small enough, by inequality \eqref{comparability2} we have
\begin{align*}
 \int_{F} g_{u(\cdot, \cdot-s)-u}^p \, d\nu\leq C \int_{F} |Du(\cdot,\cdot-s)-Du|^p\, d\nu.
\end{align*}
Again by \eqref{comparability2}, we know that $Du\in L_{\textrm{loc}}^p(\Omega_T)$. Since $F$ is compact, the Lemma now follows from the continuity of the translation operation for $L^p$ functions. 
\end{proof}
\end{lemma}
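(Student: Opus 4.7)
The plan is to bypass the nonlinearity of the upper gradient operator by working with the Cheeger derivative $D$, which is linear. The pivotal identity to establish is that time mollification commutes with $D$, namely $Du_h(x,t) = (Du)_h(x,t)$ at $\nu$-almost every interior point of $\Omega_T$. Once this is in hand, linearity gives $D(u-u_h) = Du - (Du)_h$, and the comparability \eqref{comparability2} yields the pointwise bound
\begin{equation*}
g_{u-u_h}(x,t) \le C\, |Du(x,t) - (Du)_h(x,t)|.
\end{equation*}
Since $g_u \in L^p_{\textrm{loc}}(\Omega_T)$ forces $Du \in L^p_{\textrm{loc}}(\Omega_T)$ by the same comparability, the classical $L^p$-theory of mollifiers applied to the vector-valued function $Du$ delivers both the $L^p_{\textrm{loc}}$-convergence and the pointwise $\nu$-a.e.\ convergence claims at once.

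Before attacking commutativity, I would first verify that $(g_u)_h$ is a $p$-weak upper gradient of $u_h$ by a Fubini argument on the path integral; this ensures $g_{u_h} \le (g_u)_h$ and, in particular, that $u_h$ sits in the right Newtonian space for $Du_h$ to be defined. To prove $Du_h = (Du)_h$ I would first treat Lipschitz $u$: fix a patch $U_\alpha$, write out the Cheeger difference quotient for $u_h$ around a point $x_0 \in U_\alpha$, bound the integrand uniformly using the Lipschitz constants of $u$ and of the coordinate functions $X^\alpha$, and push the $\sup_{x \in B(x_0,r)} r^{-1}(\cdots)$ limit under the mollification integral by dominated convergence. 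Uniqueness of the Cheeger derivative then forces the identity. For general $u \in L^p_{\textrm{loc}}(0,T; N^{1,p}_{\textrm{loc}}(\Omega))$ I would approximate by Lipschitz $u_j$ converging to $u$ in $N^{1,p}$, apply the identity already established for each $u_j$, and pass to the limit using that both the mollification and the Cheeger derivative are continuous in the correct topologies, via \eqref{comparability2} and the characterization of $N^{1,p}$-convergence from \cite{FranHajlKosk99}.

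The translation statement follows immediately from the same inequality $g_{u(\cdot,\cdot-s)-u} \le C\,|Du(\cdot,\cdot - s) - Du|$ combined with the classical continuity of translations on $L^p(F)$ for compact $F \subset \Omega_T$. The main obstacle is the approximation step in proving $Du_h = (Du)_h$ for general $u \in L^p(0,T; N^{1,p})$: one needs that passing from Lipschitz approximants to the limit does not spoil either the left-hand side or the right-hand side of the identity. This hinges on the structural result of \cite{FranHajlKosk99} that $u_j \to u$ in $N^{1,p}$ is equivalent to joint $L^p$-convergence of the functions and their Cheeger derivatives, which is precisely the compatibility needed to close the loop.
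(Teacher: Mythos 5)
Your proposal follows the paper's argument essentially step for step: establishing $g_{u_h}\le (g_u)_h$ by Fubini on path integrals, proving the commutation $Du_h=(Du)_h$ first for Lipschitz functions via dominated convergence on the Cheeger difference quotient and then for general $u$ by Lipschitz approximation in $N^{1,p}$ using \eqref{comparability2} and \cite{FranHajlKosk99}, and finally concluding both convergence claims from the pointwise bound $g_{u-u_h}\le C|Du-(Du)_h|$ together with classical mollifier and translation continuity in $L^p$. This is correct and matches the paper's own proof.
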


Now we are set to prove that parabolic quasiminimizers belong to the De Giorgi class.

\begin{theorem}
Let $u\in L_{\textrm{loc}}^p(0,T;N_{\textrm{loc}}^{1,p}(\Omega))$ be a parabolic K-quasimini-\\mizer. Then $u$ belongs to the parabolic De Giorgi class.

\begin{proof}
%  Since $u\in W_{\textrm{loc}}^{1,2}(0,T; N_{\textrm{loc}}^{1,p}(\Omega))$, we may use partial integration in the time derivative term of \eqref{quasi-minimizer} to obtain
% \begin{align}\label{qm. partialled}
%  \int_{F}\frac{\partial u}{\partial t}\phi \, d\nu + &C_1 \int_{F} g_{u}^p \, d\nu\le K C_2 \int_{F} g_{u-\phi}^p \, d\nu.
% \end{align}

Let $u$ be a parabolic quasiminimizer. By making a change of variable, it is straightforward to check that if $u(x,t)$ fulfills \eqref{q-mimimum after estimates}, then for any small enough $s$ also $u(x,t-s)$ fulfills \eqref{q-mimimum after estimates}. Assume a function $\phi \in L^p(0,T; N^{1,p}(\Omega))$ such that supp$\,\phi \subset \subset \Omega_T$. Then there exists an $h_0>0$ such that for every $0<h<h_0$,  $\phi_h \in C^\infty(0,T;N^{1,p}(\Omega))$ with supp$\,\phi_h\subset \subset \Omega_T$, and so is a permissible test function.  By Lemma \ref{equivalent q-minimum}, we have
\begin{equation}\label{q-minimum with mollified test function}
\begin{split}
-\int_{\{\phi_h \neq 0\}}u(x,t-s)\frac{\partial \phi_h}{\partial t} \, d\nu + &C_1 \int_{\{\phi_h \neq 0\}} g_{u(\cdot,\cdot-s)}^p \, d\nu,\\
&\le K C_2 \int_{\{\phi_h \neq 0\}} g_{u(\cdot,\cdot-s)-\phi_h}^p \, d\nu.
\end{split}
\end{equation}
We multiply both sides of \eqref{q-minimum with mollified test function} by a standard mollifier with respect to the variable $s$ and which has support $(-\varepsilon, \varepsilon)$. Integrating the resulting expression in the variable $s$ yields, after using Fubini's theorem,
\begin{align*}
-\int_{\{\phi_h \neq 0\}} u_\varepsilon(x,t)\frac{\partial\phi_h}{\partial t}\, d\nu + &C_1 \int_{\{\phi_h \neq 0\}} (g_{u(\cdot,\cdot-s)}^p)_\varepsilon \, d\nu\\
\notag &\le K C_2 \int_{\{\phi_h \neq 0\}} (g_{u(\cdot,\cdot-s)-\phi_h}^p)_\varepsilon \, d\nu.
\end{align*}
Conducting partial integration on the time derivative term and using the triangle inequality for upper gradients on the right side yields
\begin{equation}\label{sidewise integration with h}
\begin{split}
&\int_{\{\phi_h \neq 0\}} \frac{\partial u_\varepsilon}{\partial t} \phi_h \, d\nu + C_1 \int_{\{\phi_h \neq 0\}} ( g_{u(\cdot,\cdot-s)}^p )_\varepsilon\, d\nu \\
 &\le K C \int_{\{\phi_h \neq 0\}} ( g_{u(\cdot,\cdot-s)-\phi}^p )_\varepsilon \, d\nu +KC \int_{\{\phi_h \neq 0\}} g_{\phi-\phi_h}^p \, d\nu .
\end{split}
\end{equation}
As $h\rightarrow 0$, from the theory of mollifiers, it follows that  $\{\phi_h \neq 0\}$ converges to $\{\phi \neq 0\}$ in $\nu$-measure as $h\rightarrow 0$. By Lemma \ref{convergence of upper gradient} the last term on the right hand side of \eqref{sidewise integration with h} converges to zero as $h \rightarrow 0$. Hence after taking the limit $h\rightarrow 0$  we have
\begin{equation}\label{sidewise integration}
\begin{split}
\int_{\{\phi \neq 0\}} \frac{\partial u_\varepsilon}{\partial t} \phi \, d\nu + &C_1 \int_{\{\phi \neq 0\}} (g_{u(\cdot,\cdot-s)}^p)_\varepsilon \, d\nu \\
&\le K C \int_{\{\phi \neq 0\}} (g_{u(\cdot,\cdot-s)-\phi}^p)_\varepsilon  \, d\nu.
\end{split}
\end{equation}

Assume now $x\in \Omega$ and $r_1<r_2$ and $\tau_2<\tau_1<\tau_0$ to be such that $B(x,r_2)\times (\tau_2,\tau_0)\subset \Omega_T$. Let $\chi_{[\tau_2, t]}$ denote the characteristic function of the time interval $[\tau_2, t]$. Let $\varphi \in \textrm{Lip}(\Omega_T)$ be such that $0\leq\varphi \leq 1$, $\varphi=1$ in $B(x,r_1)\times (\tau_1,\tau_0)$, that $\varphi(x,t)=0$ whenever $t\leq\tau_2$ or $x\not \in B(x,r_2)$, and
\begin{align*}
g_\varphi^p\leq \frac{1}{(r_2-r_1)^p} \textrm{ and }\left|\frac{\partial \varphi}{\partial t}\right|\leq \frac{1}{\tau_1-\tau_2}.
\end{align*}
Choose the test function
\[
\phi=\pm\varphi(u_\varepsilon-k)_\pm\chi_{[\tau_2,t]},
\]
where $t$ is arbitrarily fixed in $(\tau_2,\tau_0)$. We can now estimate on the right hand side of \eqref{sidewise integration}, 
\begin{equation*}\label{siloitusarvio}
\begin{split}
 &\int_{\{\phi \neq 0\}} (g_{u(\cdot,\cdot-s)\mp\varphi(u_\varepsilon-k)_\pm \chi_{[\tau_2,t]}}^p)_\varepsilon \, d\nu
\leq \left(C\int_{\{\phi \neq 0\}} g_{u(\cdot,\cdot-s)-u}^p\, d\nu\right)_\varepsilon \\
&+ C\int_{\{\phi \neq 0\}} g_{u-u_\varepsilon }^p \, d\nu+C\int_{\{\phi \neq 0\}} g_{u_\eps-\varphi(u_\eps-k) }^p \, d\nu.
%&= K C_2 \left(\int_{\{\phi \neq 0\}} g_{u(\cdot,\cdot-s)-\varphiu \chi_{[\tau_2,t]}}^p \, d\mu dt \right)_\varepsilon + K C_2 \int_{\{\phi \neq 0\}}|\chi_{[\tau_2,t]}^h-\chi_{[\tau_2,t]}|^p g_{\varphiu }^p \, d\mu dt 
\end{split}
\end{equation*}
%\begin{align*}
% \limsup_{\varepsilon \rightarrow 0} \left( \int_{\{\phi \neq 0\}} g_{u\mp\varphi(u-k) \chi_{[\tau_2,t]}}^p \, d\nu \right)_\varepsilon \leq \int_{F' } g_{u\mp\varphi(u-k) \chi_{[\tau_2,t]}}^p \, d\nu.
%\end{align*}

By Lemma \ref{convergence of upper gradient}, the first and second terms on the right hand side of the above expression converge to zero as $\varepsilon \rightarrow 0$. For the third term we  write
\begin{align*}
&\int_{\{\phi \neq 0\}} g_{u_\varepsilon-\varphi(u_\varepsilon-k)}^p\,d\nu=\int_{\{\phi \neq 0\}} g_{(1-\varphi)(u_\varepsilon-k)}^p\,d\nu\leq C\int_{\{\phi \neq 0\}} (1-\varphi)^pg_{u_\varepsilon-u}^p\,d\nu\\
&\qquad+C\int_{\{\phi \neq 0\}} (1-\varphi)^pg_{u}^p\,d\nu+C\int_{\{\phi \neq 0\}}(u_\varepsilon-k)_\pm^pg_{\varphi}^p\, d\nu
\end{align*}
Hence, as $\varepsilon \rightarrow 0$, after recalling the properties of $\varphi$, and the definition of $\phi$, we obtain for the right hand side of \eqref{sidewise integration},
\begin{align*}
\limsup_{\varepsilon \rightarrow 0} \int_{\{\phi \neq 0\}} (g_{u(\cdot,\cdot-s)-\phi}^p)_\varepsilon  \, d\nu \leq &C\int_{\tau_2}^{\tau_1}\int_{B(x,r_2)\setminus B(x,r_1)} g_{(u-k)_\pm}^p\,d\nu\\
&+\frac{C}{(r_2-r_1)^p}\int_{\tau_2}^{\tau_0}\int_{B(x,r_2)}(u-k)_\pm^p\, d\nu.
\end{align*}
On the left hand side of \eqref{sidewise integration}, for the first term we have 
\begin{equation*}\label{partintegr}
\begin{split}
&\pm \int_{\{\phi \neq 0\}} \frac{\partial u_\varepsilon}{\partial t} \varphi (u_\varepsilon-k) \chi_{[\tau_2,t]}\, d\nu = \frac{1}{2}\int_{\{\phi \neq 0\}} \frac{\partial}{\partial t} (\varphi (u_\varepsilon-k)_\pm^2 )\chi_{[\tau_2,t]}\, d\nu\\
 & \qquad- 2\int_{\{\phi \neq 0\}} \frac{\partial \varphi}{\partial t} (u_\varepsilon-k)_\pm^2 \chi_{[\tau_2,t]}\, d\nu\\
&\longrightarrow  \frac{1}{2}\int_{B(x,r_2)} \varphi(u(x,t)-k)_\pm^2\,d\mu- 2\int_{\tau_2}^{t}\int_{B(x,r_2)} \frac{\partial \varphi}{\partial t} (u-k)_\pm^2\, d\nu,
\end{split}
\end{equation*} 
as $\varepsilon \rightarrow 0$.
For the second term on the left hand side of \eqref{sidewise integration}, we clearly have
\begin{equation*}
\begin{split} 
\lim_{\varepsilon\rightarrow 0}\int_{\{\phi \neq 0\}} (g_{u(\cdot,\cdot-s)}^p)_\varepsilon \, d\nu 
\geq \int_{\tau_2}^{t}\int_{B(x,r_1)} g_{(u-k)_\pm}^p\,d\nu.
\end{split}
\end{equation*}
Collecting the results, since the constants of the obtained inequality are independent of $t$, we obtain the estimate
\begin{equation*}\label{cleanedestimate}
\begin{split}
&\esssup_{\tau_1<t<\tau_0}\int_{B(x,r_1)} (u(x,t)-k)_\pm^2\,d\mu +\int_{\tau_2}^{\tau_0}\int_{B(x,r_1)} g_{(u-k)_\pm}^p\,d\nu\\
&\leq C\int_{\tau_2}^{\tau_0}\int_{B(x,r_2)\setminus B(x,r_1)} g_{(u-k)_\pm}^p\,d\nu+\frac{C}{(r_2-r_1)^p}\int_{\tau_2}^{\tau_0}\int_{B(x,r_2)}(u-k)_\pm^p\, d\nu\\
&\qquad+\frac{C}{\tau_1-\tau_2}\int_{\tau_2}^{\tau_1}\int_{B(x,r_2)}(u-k)_\pm^2\, d\nu.
\end{split}
\end{equation*}
We then multiply the second term on the left hand side by the constant $C$, and sum the resulting term on both sides of the above inequality, to obtain
\begin{equation}\label{second to last}
\begin{split}
&\esssup_{\tau_1<t<\tau_0}\int_{B(x,r_1)} (u(x,t)-k)_\pm^2\,d\mu +C_1\int_{\tau_1}^{\tau_0}\int_{B(x,r_1)} g_{(u-k)_\pm}^p\,d\nu\\
&\leq C_2\int_{\tau_2}^{\tau_0} \int_{B(x,r_2)}g_{(u-k)_\pm}^p\,d\nu+\frac{C_2}{(r_2-r_1)^p}\int_{\tau_2}^{\tau_0}\int_{B(x,r_2)}(u-k)_\pm^p\, d\nu\\
&\qquad+\frac{C_2}{\tau_1-\tau_2}\int_{\tau_2}^{\tau_1}\int_{B(x,r_2)}(u-k)_\pm^2\, d\nu,
\end{split}
\end{equation}
where $C_1>C_2>0$. Set now
\begin{align*}
f(\rho,\tau)=\frac{1}{C_1}\esssup_{\tau<t<\tau_0}\int_{B(x,\rho)} (u(x,t)-k)_\pm^2\,d\mu+\int_{\tau}^{\tau_0}\int_{B(x,\rho)} g_{(u-k)_\pm}^p\,d\nu.
\end{align*}
Then by \eqref{second to last}, for every $0<r_1<r_2<r$ and $0<\tau_2<\tau_1<\tau_0$,  we have
\begin{align*}
&f(r_1,\tau_1)\leq \frac{C_2}{C_1}f(r_2,\tau_2)\\
&+\frac{C}{(r_2-r_1)^p}\int_{\tau_2}^{\tau_0}\int_{B(x,r_2)}(u-k)_\pm^p\, d\nu+
\frac{C}{(\tau_1-\tau_2)}\int_{\tau_2}^{\tau_1}\int_{B(x,r_2)}(u-k)_\pm^2\, d\nu.
\end{align*}
Using the following real analytic lemma now completes the proof.
\end{proof}
\end{theorem}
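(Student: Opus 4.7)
The plan is to derive the energy estimate of Definition \ref{De Giorgi class} by inserting the truncation $\phi = \pm\varphi(u-k)_\pm \chi_{[\tau_2,t]}$ into the quasiminimizer inequality of Lemma \ref{equivalent q-minimum}, where $\varphi$ is a Lipschitz space-time cutoff that equals one on $B(x,r_1)\times(\tau_1,\tau_0)$ and vanishes outside $B(x,r_2)\times(\tau_2,\tau_0)$, with the usual control $g_\varphi\le (r_2-r_1)^{-1}$ and $|\partial_t\varphi|\le (\tau_1-\tau_2)^{-1}$. The product rule $g_{\varphi(u-k)_\pm}\le \varphi\, g_{(u-k)_\pm}+(u-k)_\pm g_\varphi$, together with the structural upper bound $G\le C_2|q|^p$ on the right-hand side and the lower bound $C_1|q|^p\le G$ on the left, should produce the $L^p$ gradient and $L^p$ truncation terms expected in the De Giorgi estimate, while partial integration of the time term should produce the essential supremum in $t$ of $\int (u-k)_\pm^2 \, d\mu$ on the left and the initial/bulk $L^2$ truncation terms on the right.

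The main obstacle is time regularity. Since $u\in L^p(0,T;N^{1,p}(\Omega))$ has no classical time derivative, we cannot integrate by parts directly against $\varphi(u-k)_\pm$. The classical fix is to replace $u$ inside the test function by its time mollification $u_\varepsilon$, perform the calculus on $u_\varepsilon$ (which is smooth in $t$), and then send $\varepsilon\to 0$. In a Euclidean setting this is painless because $\nabla u_\varepsilon = (\nabla u)_\varepsilon \to \nabla u$ in $L^p$. Here, however, upper gradients are not linear, so $g_{u-u_\varepsilon}\to 0$ is not automatic — this is the one genuinely new difficulty.

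To resolve this I would pass through the Cheeger derivative $D$, which is a linear operator satisfying the two-sided comparison $g_f\asymp |Df|$. By linearity $Du_\varepsilon=(Du)_\varepsilon$, so $g_{u-u_\varepsilon}\lesssim |Du-(Du)_\varepsilon|\to 0$ in $L^p_{\mathrm{loc}}$ by the standard mollifier theory applied to $Du\in L^p_{\mathrm{loc}}$; similarly $g_{u(\cdot,\cdot-s)-u}\to 0$ as $s\to 0$ by $L^p$-continuity of translations — this is exactly Lemma \ref{convergence of upper gradient}. With this tool in hand, I would start from the quasiminimizer inequality for the translate $u(x,t-s)$, mollify in $s$ with a mollifier supported on $(-\varepsilon,\varepsilon)$, apply Fubini to shift the mollification onto $u$, integrate by parts (now legal), and then use $g_{u_\varepsilon-\varphi(u_\varepsilon-k)_\pm}=g_{(1-\varphi)(u_\varepsilon-k)_\pm}$ to control the right-hand side by $g_{u-u_\varepsilon}$, $g_u$, and $(u-k)_\pm g_\varphi$ terms.

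After passing $\varepsilon\to 0$ and applying Lemma \ref{convergence of upper gradient} to kill the $g_{u-u_\varepsilon}$ contributions, one is left with an estimate that is almost the De Giorgi inequality except that the right-hand side still carries an annular term $C\int_{\tau_2}^{\tau_0}\int_{B(x,r_2)\setminus B(x,r_1)} g_{(u-k)_\pm}^p\, d\nu$. This is a well-known nuisance and is removed by a standard hole-filling lemma: one introduces $f(\rho,\tau)= C_1^{-1}\esssup_{\tau<t<\tau_0}\int_{B(x,\rho)}(u-k)_\pm^2\,d\mu + \int_\tau^{\tau_0}\int_{B(x,\rho)}g_{(u-k)_\pm}^p\,d\nu$ and checks that the obtained bound has the form $f(r_1,\tau_1)\le \theta f(r_2,\tau_2) + (\text{admissible terms})$ with $\theta<1$; Giusti-type iteration on concentric balls then absorbs the offending term and yields the required De Giorgi estimate.
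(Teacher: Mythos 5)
Your proposal is correct and follows essentially the same route as the paper: translation in time plus mollification in $s$, the Cheeger-derivative linearity argument (the paper's Lemma \ref{convergence of upper gradient}) to get $g_{u-u_\varepsilon}\to 0$ and $g_{u(\cdot,\cdot-s)-u}\to 0$, testing with $\pm\varphi(u_\varepsilon-k)_\pm\chi_{[\tau_2,t]}$, and finally hole-filling combined with the Giusti-type iteration lemma (Lemma \ref{continuous estimate}) to absorb the annular gradient term. The only detail you gloss over is that the chosen test function is not $C^\infty$ in time, so one must first extend the quasiminimizer inequality to such $\phi$ by an additional time mollification $\phi_h$ of the test function (as the paper does before \eqref{sidewise integration}); this is a routine approximation step and does not affect the substance of your argument.
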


\begin{lemma}\label{continuous estimate}
Let $f(\rho,\tau)$ be a nonnegative and bounded function on $[0,r]\times[0, \tau_0]$. If for every $0< r_1 < r_2<r$ and $0 < \tau_2<\tau_1<\tau_0$, we have
\begin{align*}
f(r_1,\tau_1)\leq \sigma_3 f(r_2,\tau_2)+A(r_2-r_1)^{-\alpha}+B(\tau_1-\tau_2)^{-\beta},
\end{align*}
where $A,B,\alpha, \beta, \sigma_3$ are nonnegative constants and $0\leq \sigma_3 <1$, then for every $0< r_1< r_2<r$ and $0 <\tau_2<\tau_1<\tau_0$
\begin{align*}
f(r_1,\tau_1) \leq C A(r_2- r_1)^{-\alpha} +CB(\tau_1-\tau_2)^{-\beta},
\end{align*}
where the constant $C$ depends only on $\alpha,\beta, \sigma_3$.
\begin{proof}
For proof use Lemma 2.1.4 from \cite{WuZhaoYinLi01} for both variables, one at a time.
\end{proof}
\end{lemma}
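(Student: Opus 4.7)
The plan is to perform a joint geometric iteration on both variables simultaneously; this is the natural two-variable analogue of the classical Giaquinta--Giusti iteration lemma. Fix $r_1<r_2$ and $\tau_2<\tau_1$ as in the statement.

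Since $\sigma_3<1$, I first pick a contraction parameter $\lambda\in(0,1)$ close enough to $1$ so that $\sigma_3<\lambda^{\alpha}$ and $\sigma_3<\lambda^{\beta}$. I then introduce geometric sequences $\rho_n=r_1+(r_2-r_1)(1-\lambda^n)$ and $\varsigma_n=\tau_1-(\tau_1-\tau_2)(1-\lambda^n)$, so that $\rho_0=r_1$, $\varsigma_0=\tau_1$, $\rho_n\nearrow r_2$, $\varsigma_n\searrow\tau_2$, with one-step increments $\rho_{n+1}-\rho_n=(r_2-r_1)(1-\lambda)\lambda^n$ and $\varsigma_n-\varsigma_{n+1}=(\tau_1-\tau_2)(1-\lambda)\lambda^n$. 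All of these values remain strictly inside the domain of validity of the hypothesis.

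Applying the hypothesis with $(r_1,r_2,\tau_1,\tau_2)\mapsto(\rho_n,\rho_{n+1},\varsigma_n,\varsigma_{n+1})$ produces
\begin{equation*}
f(\rho_n,\varsigma_n)\le \sigma_3\,f(\rho_{n+1},\varsigma_{n+1})+\tilde A\,\lambda^{-n\alpha}+\tilde B\,\lambda^{-n\beta},
\end{equation*}
where $\tilde A=A[(r_2-r_1)(1-\lambda)]^{-\alpha}$ and $\tilde B=B[(\tau_1-\tau_2)(1-\lambda)]^{-\beta}$. Iterating from $n=0$ to $n=N-1$ yields
\begin{equation*}
f(r_1,\tau_1)\le \sigma_3^{N}f(\rho_N,\varsigma_N)+\tilde A\sum_{n=0}^{N-1}\bigl(\sigma_3\lambda^{-\alpha}\bigr)^n+\tilde B\sum_{n=0}^{N-1}\bigl(\sigma_3\lambda^{-\beta}\bigr)^n.
\end{equation*}
Since $f$ is bounded and $\sigma_3<1$, the first term on the right vanishes as $N\to\infty$; since $\sigma_3\lambda^{-\alpha}<1$ and $\sigma_3\lambda^{-\beta}<1$ by the choice of $\lambda$, the two geometric series converge to constants depending only on $\alpha,\beta,\sigma_3$. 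Passing to the limit gives the desired bound $f(r_1,\tau_1)\le CA(r_2-r_1)^{-\alpha}+CB(\tau_1-\tau_2)^{-\beta}$.

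The only delicate point is the selection of $\lambda$, which must dominate both $\sigma_3^{1/\alpha}$ and $\sigma_3^{1/\beta}$ in order that the two geometric sums converge simultaneously; this is precisely where the hypothesis $\sigma_3<1$ is used. The boundedness of $f$ plays the supplementary role of allowing us to discard $\sigma_3^{N}f(\rho_N,\varsigma_N)$ in the limit without any continuity or monotonicity assumption. The alternative route suggested in the authors' hint applies the classical one-variable iteration lemma of \cite{WuZhaoYinLi01} successively in each variable, which amounts to the same mechanism split into two stages; I find the simultaneous version above marginally cleaner and fully self-contained.
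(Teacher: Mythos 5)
Your argument is correct: the sequences $\rho_n=r_1+(r_2-r_1)(1-\lambda^n)$ and $\varsigma_n=\tau_1-(\tau_1-\tau_2)(1-\lambda^n)$ stay strictly inside the admissible ranges, each application of the hypothesis is legitimate, the choice $\sigma_3<\lambda^{\alpha}$, $\sigma_3<\lambda^{\beta}$ (automatic when $\alpha$ or $\beta$ vanishes, since $\lambda^0=1>\sigma_3$) makes both geometric series converge, and the boundedness of $f$ lets you discard $\sigma_3^N f(\rho_N,\varsigma_N)$ in the limit, giving $C=\max\bigl\{(1-\lambda)^{-\alpha}(1-\sigma_3\lambda^{-\alpha})^{-1},\,(1-\lambda)^{-\beta}(1-\sigma_3\lambda^{-\beta})^{-1}\bigr\}$, which depends only on $\alpha,\beta,\sigma_3$ once $\lambda$ is fixed in terms of these. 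The paper itself gives no written argument: it simply invokes the classical one-variable iteration lemma (Lemma 2.1.4 of \cite{WuZhaoYinLi01}) ``for both variables, one at a time.'' Your route differs in that you run a single joint iteration in which the radii increase and the times decrease along the same geometric sequence, rather than freezing one variable and applying the one-dimensional lemma twice. This buys two things: the proof is self-contained, and it sidesteps the mild bookkeeping that the sequential application requires, since the hypothesis couples the two variables (the function on the right-hand side is evaluated at a different time level than the one on the left, so a literal one-variable application needs an auxiliary formulation or a supremum in the frozen variable). Conversely, the paper's citation route is shorter on the page and reuses a standard reference; the mechanism — geometric refinement plus summation of a series dominated by $\sigma_3\lambda^{-\alpha}<1$ — is the same in both.
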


\def\cprime{$'$} \def\cprime{$'$}


\begin{thebibliography}{WZYL01}

\bibitem[BB11]{BjorBjor11}
Anders Bj\"orn and Jana Bj\"orn.
\newblock {\em Nonlinear Potential Theory on Metric Spaces}, volume~17 of {\em
  EMS Tracts in Mathematics}.
\newblock European Mathematics Society (EMS), Zürich, 2011.

\bibitem[BBS03]{BjorBjorShan03}
Anders Bj\"orn, Jana Bj\"orn, and Nageswari Shanmugalingam.
\newblock The diriclet problem for p-harmonic functions on metric spaces.
\newblock {\em J.Reine Angew. Math}, 556:173--203, 2003.

\bibitem[Buc99]{Buck99}
Stephen~M. Buckley.
\newblock Is the maximal function of a {L}ipschitz function continuous?
\newblock {\em Ann. Acad. Sci. Fenn. Math.}, 24(2):519--528, 1999.

\bibitem[Che99]{Chee99}
Jeff Cheeger.
\newblock Differentiability of lipschitz functions on metric measure spaces.
\newblock {\em Geom. Funct. Anal.}, 9(3):428--517, 1999.

\bibitem[CS84a]{ChiaSera84a}
Filippo Chiarenza and Raul Serapioni.
\newblock Degenerate parabolic equations and {H}arnack inequality.
\newblock {\em Ann. Mat. Pura Appl. (4)}, 137:139--162, 1984.

\bibitem[CS84b]{ChiaSera84b}
Filippo~M. Chiarenza and Raul~P. Serapioni.
\newblock A {H}arnack inequality for degenerate parabolic equations.
\newblock {\em Comm. Partial Differential Equations}, 9(8):719--749, 1984.

\bibitem[CS85]{ChiaSera85}
Filippo Chiarenza and Raul Serapioni.
\newblock A remark on a {H}arnack inequality for degenerate parabolic
  equations.
\newblock {\em Rend. Sem. Mat. Univ. Padova}, 73:179--190, 1985.

\bibitem[DiB86]{DiBe86}
Emmanuele DiBenedetto.
\newblock On the local behaviour of solutions of degenerate parabolic equations
  with measurable coefficients.
\newblock {\em Ann. Scuola Norm. Sup. Pisa Cl. Sci. (4)}, 13(3):487--535, 1986.

\bibitem[DiB88]{DiBe88}
Emmanuele DiBenedetto.
\newblock Harnack estimates in certain function classes.
\newblock {\em Atti seminario matematico e fisico univ. Modena}, 37:173--182,
  1988.

\bibitem[DiB93]{DiBe93}
Emmanuele DiBenedetto.
\newblock {\em Degenerate parabolic equations}.
\newblock Universitext. Springer-Verlag, New York, 1993.

\bibitem[FHP99]{FranHajlKosk99}
B.~Franchi, P.~Hajlaz, and Koskela P.
\newblock Definitions of sobolev classes in metric spaces.
\newblock {\em Ann. Inst. Fourier(Grenoble)}, 49:1903--1924, 1999.

\bibitem[Gia93]{Giaq93}
Mariano Giaquinta.
\newblock {\em Introduction to regularity theory for nonlinear elliptic
  systems}.
\newblock Lectures in Mathematics ETH Z\"urich. Birkh\"auser Verlag, Basel,
  1993.

\bibitem[Giu]{Gius03}
Enrico Giusti.
\newblock {\em Direct methods in the calulus of variations}.

\bibitem[GV06]{GianVesp06c}
Ugo Gianazza and Vincenzo Vespri.
\newblock Parabolic {D}e {G}iorgi classes of order {$p$} and the {H}arnack
  inequality.
\newblock {\em Calc. Var. Partial Differential Equations}, 26(3):379--399,
  2006.

\bibitem[Hei01]{Hein01}
Juha Heinonen.
\newblock {\em Lectures on analysis on metric spaces}.
\newblock Universitext. Springer-Verlag, New York, 2001.

\bibitem[HK95]{HajlKosk95}
Piotr Hajlasz and Pekka Koskela.
\newblock Sobolev meets {P}oincar\'e.
\newblock {\em C. R. Acad. Sci. Paris}, 320(1):1211--1215, 1995.

\bibitem[HK98]{HeinKosk98}
Juha Heinonen and Pekka Koskela.
\newblock Quasiconformal maps in metric spaces with controlled geometry.
\newblock {\em Acta Math.}, 181:1--61, 1998.

\bibitem[KLSU11]{KuusLaleSiljUrba10}
Tuomo Kuusi, Rojbin Laleoglu, Juhana Siljander, and Jos\'e~Miguel Urbano.
\newblock H\"older continuity for trudinger's equation in measure spaces.
\newblock To appear in {\em Calc. Var. Partial Differential Equations}, 2011.

\bibitem[KM98]{KoskMacm98}
P.~Koskela and P.~Macmanus.
\newblock Quasiconformal mappings and sobolev spaces.
\newblock {\em Studia Math.}, 131:1--17, 1998.

\bibitem[KS01]{KinnShan01}
Juha Kinnunen and Nageswari Shanmugalingam.
\newblock Regularity of quasi-minimizers on metric spaces.
\newblock {\em Manuscripta Math.}, 105(3):401--423, 2001.

\bibitem[KSU11]{KuusSiljUrba10}
Tuomo Kuusi, Juhana Siljander, and Jos\'e~Miguel Urbano.
\newblock Local h\"older continuity for doubly nonlinear parabolic equations.
\newblock To appear in {\em Indiana Univ. Math. J.}, 2011.

\bibitem[KZ08]{KeitZhon08}
Stephen Keith and Xiao Zhong.
\newblock The {P}oincar\'e inequality is an open ended condition.
\newblock {\em Ann. of Math. (2)}, 167(2):575--599, 2008.

\bibitem[LSU68]{LadySoloUral68}
O.A. Ladyzhenskaja, V.A. Solonnikov, and N.N. Ural'ceva.
\newblock Linear and quasilinear equations of parabolic type.
\newblock {\em Amer. Math. Soc. Transl. Math. Mono.}, 23, 1968.

\bibitem[Sha00]{Shan00}
Nageswari Shanmugalingam.
\newblock Newtonian spaces: an extension of {S}obolev spaces to metric measure
  spaces.
\newblock {\em Rev. Mat. Iberoamericana}, 16(2):243--279, 2000.

\bibitem[Sha01]{Shan01}
Nageswari Shanmugalingam.
\newblock Harmonic functions on metric spaces.
\newblock {\em Illinois J. Math.}, 45:1021--1050, 2001.

\bibitem[Urb08]{Urba08}
Jos{\'e}~Miguel Urbano.
\newblock {\em The method of intrinsic scaling}, volume 1930 of {\em Lecture
  Notes in Mathematics}.
\newblock Springer-Verlag, Berlin, 2008.
\newblock A systematic approach to regularity for degenerate and singular PDEs.

\bibitem[Wie87]{Wies87}
W.~Wieser.
\newblock Parabolic {$Q$}-minima and minimal solutions to variational flows.
\newblock {\em Manuscripta Math.}, 59(1):63--107, 1987.

\bibitem[WZYL01]{WuZhaoYinLi01}
Z.~Wu, J.~Zhao, J.~Yin, and H.~Li.
\newblock {\em Nonlinear Diffusion Equation}.
\newblock World Scientific, 2001.

\bibitem[Zho93]{Zhou93}
Shulin Zhou.
\newblock On the local behaviour of parabolic {$Q$}-minima.
\newblock {\em J. Partial Differential Equations}, 6(3):255--272, 1993.

\bibitem[Zho94]{Zhou94}
Shulin Zhou.
\newblock Parabolic {$Q$}-minima and their application.
\newblock {\em J. Partial Differential Equations}, 7(4):289--322, 1994.

\end{thebibliography}
\end{document}